\newtheorem{theorem}{Theorem}
\newtheorem{definition}{Definition}
\newtheorem{lemma}{Lemma}
\newtheorem{corollary}{Corollary}
\newtheorem{remark}{Remark}
\newtheorem{example}{Example}
\date{}
\numberwithin{equation}{section}
\numberwithin{theorem}{section}
\numberwithin{lemma}{section}
\numberwithin{corollary}{section}
\numberwithin{remark}{section} 
\numberwithin{proposition}{section}
\numberwithin{definition}{section}
\def\dashint{\,\ThisStyle{\ensurestackMath{
            \stackinset{c}{.2\LMpt}{c}{.5\LMpt}{\SavedStyle-}{\SavedStyle\phantom{\int}}}
        \setbox0=\hbox{$\SavedStyle\int\,$}\kern-\wd0}\int}
\def \R {\mathbb{R}}
\def \dist {\mathrm{dist}}
\def \loc {\mathrm{loc}}
\begin{document}
	
	\title[Interacting free boundaries in obstacle problems]{Interacting free boundaries\\ in obstacle problems}
	
	\author[D. J. Ara\'ujo]{Dami\~ao J. Ara\'ujo}
	\address{Department of Mathematics, Universidade Federal da Para\'iba, 58059-900, Jo\~ao Pessoa-PB, Brazil}{}
	\email{araujo@mat.ufpb.br}
	
	\author[R. Teymurazyan]{Rafayel Teymurazyan}
	\address{Applied Mathematics and Computational Sciences Program (AMCS), Compu\-ter, Electrical and Mathematical Sciences and Engineering Division (CEMSE), King Abdullah University of Science and Technology (KAUST), Thuwal, 23955-6900, Kingdom of Saudi Arabia and University of Coimbra, CMUC, Department of Mathematics, 3000-143 Coimbra, Portugal}{}
	\email{rafayel.teymurazyan@kaust.edu.sa}

\begin{abstract}
We study obstacle problems governed by two distinct types of diffusion operators involving interacting free boundaries. We obtain a somewhat surprising coupling property, leading to a comprehensive analysis of the free boundary. More precisely, we show that near regular points of a coordinate function, the free boundary is analytic, whereas singular points lie on a smooth manifold. Additionally, we prove that uncoupled free boundary points are singular, indicating that regular points lie exclusively on the coupled free boundary. Furthermore, optimal regularity, non-degeneracy, and lower dimensional Hausdorff measure estimates are obtained. Explicit examples illustrate the sharpness of assumptions.

\bigskip
		
\noindent \textbf{Keywords:}  Obstacle problems, elliptic systems, infinity Laplacian, regularity estimates, free boundary problems.

\medskip

\noindent \textbf{MSC 2020:} 35J47; 35R35; 60J60.
\end{abstract}

\bigskip
 
\maketitle
\section{Introduction}\label{s1}

In recent years, the study of strongly coupled systems was boosted by applications in industry (catalysis processes), chemical engineering, and population dynamics (see \cite{AT23, CDV, CSS17, MNS21, MR20, ST18} and references therein). These models operate as systems of equations and free boundaries, in which a nonlinear diffusion process for the unknown (temperature of a given material) is observed only in regions where the other unknown (pressure) exceeds a certain threshold $\varphi$ (an obstacle), and conversely, a similar process is activated for the second unknown only in regions where the first one surpasses a given threshold $\psi$ (another obstacle). In financial mathematics, these types of problems are related to optimal switching when modeling switching of a state for cost reduction. Applications include stochastic switching zero-sum games and optimal stopping problems (see \cite{FS15} and references therein). The mathematical model can be formulated as an interactive obstacle-type problem
\begin{equation*}\label{motivation} 
\left\{
\begin{array}{rcll}
\min\{F(D^2u,Du,x),v-\varphi\} & = & 0 &\\[0.2cm]
\min\{G(D^2v,Dv,x),u-\psi\} & = & 0, &
\end{array}
\right. 
\end{equation*}
where $F$ and $G$ are diffusive elliptic operators.\\

We concentrate on problems ruled by two different types of diffusion operators -- the classical Laplacian and the infinity Laplacian -- a prototype of which is
\begin{equation*}\label{motivation2} 
\left\{
\begin{array}{rcll}
\min\{f - \Delta_\infty u,v-\mu\} & = & 0 & \mbox{ in } \,\, B_1\\[0.2cm]
\min\,\{\,g \,- \, \Delta v \,, u-\kappa\} & = & 0 & \mbox{ in } \,\, B_1,
\end{array}
\right. 
\end{equation*}
for constant thresholds $\mu,\kappa \in \mathbb{R}$ and $f,g\in L^\infty(B_1)$. In the non-variational context, the second-order operators mentioned above are defined by
$$ 
\Delta w:= trace(D^2 w) \quad \mbox{and} \quad \Delta_\infty w:= \langle D^2 w  \cdot Dw,D w \rangle.
$$ 
The Laplacian stands as the primary example of a diffusive second-order operator. For the related obstacle problem, the regularity of the solution and the corresponding free boundary is quite well understood \cite{C77, C98, CK80, CKS00, CS15, F18, FS14, PSU12, RO18}. On the other hand, the infinity Laplacian, characterized by its high elliptic degeneracy, has garnered substantial attention over the past three decades. It has strong connections with models describing scenarios such as random tug-of-war games and mass transfer problems and is linked to the best Lipschitz extension problem and the concept of comparison with cones, \cite{AU23, L16}. The study of obstacle problems governed by the infinity Laplacian was pioneered in \cite{RTU15}.\\

To keep the presentation of the ideas simple, we consider the following problem with zero obstacles
\begin{equation}\label{mainsys} 
\left\{
\begin{array}{rcll} 
\Delta_\infty u & \leq & f & \mbox{ in }\,\, B_1\\
\Delta v & \leq & g & \mbox{ in }\,\, B_1\\
\Delta_\infty u & = & f & \mbox{ in }\,\, B_1 \cap \{v>0\} \\
\Delta v & = & g & \mbox{ in }\,\, B_1\cap \{u>0\}.
\end{array}
\right. 
	\end{equation}
Unlike earlier works on obstacle problems with two or more equations (see, for example, \cite{CDV, GMR24, RT11, S05}), the system \eqref{mainsys} involves two quite different types of second-order operators, and equations are satisfied in two \textit{a priori} different unknown sets. 
	
We obtain regularity estimates for nonnegative solutions and corresponding free boundaries by providing qualitative properties for the \textit{coupled} free boundary
$$
\partial\{|(u,v)|>0\},    
$$
where 
    \begin{equation}\label{intrinsic}
        |(u,v)|:=u^{1/2}+v^{1/3}.
    \end{equation}
    The particular choice of exponents in \eqref{intrinsic} results from the intrinsic geometry of the problem. Solutions are understood in the viscosity sense, and the pair $(u,v)$ is said to be nonnegative if both $u$ and $v$ are nonnegative. 
    
    The existence of solutions is derived using Schaefer's fixed point theorem for an intrinsic penalized problem, as argued in \cite{AT23}. Furthermore, we obtain optimal growth and non-degeneracy estimates along the free boundary (Theorems \ref{growththm} and \ref{nondegthm} respectively) and conclude that the $(n-1)-$dimensional Hausdorff measure of the free boundary is locally finite (Theorem \ref{fbregularity}). Moreover, we show that an analog of Caffarelli's dichotomy holds. More precisely, we deduce that near regular points, the free boundary of the coordinate function $v$ is analytic, while singular free boundary points lie on a smooth manifold (Theorem \ref{dichotomy}). Additionally, we show that all points in the \textit{uncoupled} free boundary $\partial\{v>0\}\setminus\partial\{u>0\}$ are singular (Theorem \ref{singularset}).
    
    The main ingredient in the analysis is uncovering that, in fact, the free boundary of the coordinate solution $u$ coincides with that of an intrinsic combination of both coordinate solutions (Theorem \ref{coupledFBs}). This fact is a rather surprising result, as the study of the obstacle problem driven by the $\infty$-Laplacian is still in its infancy, with the only known regularity result being the one obtained in \cite{RTU15} (see also \cite{L14} for the blow-up limits). However, the estimate for viscosity solutions of $\Delta_\infty u\le f$ obtained in \cite{LW} enables equicontinuity for solutions of a built-in obstacle-type problem that paves the way to regularity theory. Furthermore, our results provide a new approach when studying the regularity of the free boundary in the obstacle problem driven by highly degenerate operators like the infinity Laplacian. In fact, if one can couple the problem with the one that solves \eqref{mainsys} for a suitable right-hand-side, then the free boundary of the obstacle problem ruled by the degenerate operator coincides with that of the coupled system, inheriting all the properties.\\

    The paper is organized as follows: in Section \ref{s2}, we prove the existence of viscosity solutions (Theorem \ref{existthm}). Section \ref{gensection} is devoted to the study of a built-in obstacle problem (Theorem \ref{growthonu}), which is then used in Section \ref{s5}, revealing a strong interplay between coordinate free boundaries and the intrinsic free boundary (Theorem \ref{coupledFBs}). Still, in Section \ref{s5}, we prove the regularity of solutions at points centered on the intrinsic free boundary (Theorem \ref{growththm}). We derive a weak comparison principle in Section \ref{s6} (Lemma \ref{WeakCP}), which then yields non-degeneracy of solutions (Theorem \ref{nondegthm}). Section \ref{FBregularity} is devoted to the regularity of the free boundary (Theorems \ref{fbregularity}-\ref{singularset}). In Section \ref{sMain}, through explicit examples, we illustrate the sharpness of assumptions in our main results. 

\medskip

\section{Existence of solutions}\label{s2}
	
In this section, we prove the existence of solutions for \eqref{mainsys} using Schaefer's fixed point theorem for an intrinsic penalized problem, as argued in \cite{AT23}, provided
\begin{equation}\label{bounded}
    f,g\in L^\infty(B_1).
\end{equation}
Solutions are understood in the viscosity sense. More precisely, for an open set $\mathcal{O}$ and an elliptic operator $H$, we say $w\in C(\mathcal{O})$ satisfies
$$
H(D^2 w)\le h\,\,(\ge h) \quad \mbox{in} \quad \mathcal{O},
$$
in the viscosity sense, if for any $\phi\in C^2(\mathcal{O})$ that touches $w$ from below (above) at $x_0\in\mathcal{O}$, one has $H(D^2\phi(x_0))\le h(x_0)\,\,(\ge h(x_0))$. In the viscosity sense, equation $H(D^2 w)=h$ means that the above inequalities hold simultaneously. 

To state the following lemma, we recall Schaefer's fixed point theorem (see, for example, \cite{Z86}).
\begin{theorem}[\textbf{Schaefer}]\label{Schaefer}
		If $X$ is a Banach space, $T:X\to X$ is continuous and compact, and the set
		$$
		\mathcal{E} = \{z\in X;\ \exists \ \theta\in [0,1]\ \text{such that}\ z=\theta T(z)\}
		$$
		is bounded, then $T$ has a fixed point.
	\end{theorem}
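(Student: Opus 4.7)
The plan is to reduce Schaefer's theorem to Schauder's fixed point theorem, using the hypothesis that $\mathcal{E}$ is bounded to localize the problem inside a large closed ball. First I would choose $R>0$ so that $\|z\|<R$ for every $z\in\mathcal{E}$. The goal is then to locate a fixed point of $T$ inside the closed ball $\overline{B_R(0)}\subset X$.

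Next I would perform the standard radial-retraction construction. Let $P_R\colon X\to \overline{B_R(0)}$ denote the metric projection onto the closed ball of radius $R$, given explicitly by $P_R(x)=x$ when $\|x\|\le R$ and $P_R(x)=Rx/\|x\|$ when $\|x\|>R$. This map is continuous, has image equal to $\overline{B_R(0)}$, and is non-expansive. Define
\[
T_R := P_R\circ T\colon \overline{B_R(0)}\to \overline{B_R(0)}.
\]
Since $T$ is continuous and compact and $P_R$ is continuous with bounded image, $T_R$ is continuous and compact, and by construction it maps the nonempty, closed, convex, bounded set $\overline{B_R(0)}$ into itself. Schauder's fixed point theorem then provides a fixed point $z^{*}\in\overline{B_R(0)}$ of $T_R$.

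Finally, I would split into two cases. If $\|T(z^{*})\|\le R$ then $T_R(z^{*})=T(z^{*})$, so $z^{*}=T(z^{*})$ is the desired fixed point. Otherwise $\|T(z^{*})\|>R$, in which case $z^{*}=R\,T(z^{*})/\|T(z^{*})\|$, forcing $\|z^{*}\|=R$ and $z^{*}=\theta\, T(z^{*})$ with $\theta=R/\|T(z^{*})\|\in(0,1)$. But then $z^{*}\in\mathcal{E}$ with $\|z^{*}\|=R$, contradicting the choice of $R$. Hence only the first alternative can occur, so $T$ has a fixed point.

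The creative content of the argument is essentially the radial retraction together with the dichotomy in the last step; the real analytic work is hidden inside Schauder's fixed point theorem, which itself rests on a finite-dimensional Brouwer-type approximation exploiting the relative compactness of $T(\overline{B_R(0)})$. Consequently, the main obstacle is not any single step of the proof above but rather the machinery needed to justify Schauder's theorem, which one takes for granted at this level of the exposition.
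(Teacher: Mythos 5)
The paper does not prove Schaefer's theorem; it simply recalls the statement and cites Zeidler's monograph \cite{Z86}, so there is no in-paper proof to compare against. Your argument is nonetheless the standard (and correct) proof: reduce to Schauder's fixed point theorem by composing $T$ with the radial retraction $P_R$ onto a ball large enough to strictly contain the bounded set $\mathcal{E}$, obtain a fixed point $z^*$ of $T_R = P_R\circ T$, and observe that the alternative $\|T(z^*)\|>R$ would place $z^*\in\mathcal{E}$ with $\|z^*\|=R$, contradicting the choice of $R$. The two minor points worth making explicit are that $T_R$ inherits compactness because the continuous image of a relatively compact set is relatively compact, and that one should pick $R$ so the bound on $\mathcal{E}$ is strict (e.g.\ $R=\sup_{z\in\mathcal{E}}\|z\|+1$), exactly as you did. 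Your closing remark is also apt: the substantive content lives in Schauder's theorem, which is taken as known here just as Schaefer's theorem is taken as known in the paper.
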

Let $\beta\in C^\infty(\R)$ be a non-decreasing function such that $\beta\in[0,1]$ and
	\begin{eqnarray*}\label{beta}
		\beta(s)=1 \mbox{ for } s \geq 1\quad \mbox{and} \quad \beta(s)=0 \mbox{ for } s \leq 0.
	\end{eqnarray*}
	For each $\varepsilon>0$, set 
	\begin{equation*}\label{Be}
		\beta_\varepsilon(s)= \beta(s/\varepsilon).
	\end{equation*}	 
 \begin{lemma}\label{Beta}
		If $\varphi,\psi\in C^{0,1}(\partial{B_1})$ and \eqref{bounded} holds, then there is a pair $(u_\varepsilon,v_\varepsilon)$ that is a viscosity solution of
		\begin{equation}\label{esys}
			\left\{
			\begin{array}{rcll}
				\Delta_\infty u_\varepsilon & = & f\,\beta_\varepsilon(v_\varepsilon) & \mbox{ in } B_1\\
				\Delta v_\varepsilon & = & g\,\beta_\varepsilon(u_\varepsilon) & \mbox{ in } B_1\\
				u_\varepsilon & = &\varphi & \mbox{ on } \partial B_1\\
				v_\varepsilon & = &\psi& \mbox{ on } \partial B_1. 
			\end{array}
			\right.
		\end{equation}		
	\end{lemma}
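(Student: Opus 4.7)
The plan is to invoke Schaefer's fixed point theorem on the Banach space $X=C(\overline{B_1})\times C(\overline{B_1})$ equipped with $\|(u,v)\|=\|u\|_\infty+\|v\|_\infty$. To each $(u,v)\in X$ I would associate $T(u,v)=(\tilde u,\tilde v)$, where $\tilde u$ is the unique continuous viscosity solution of $\Delta_\infty \tilde u=f\,\beta_\varepsilon(v)$ in $B_1$ with $\tilde u=\varphi$ on $\partial B_1$, and $\tilde v$ is the unique viscosity solution of $\Delta \tilde v=g\,\beta_\varepsilon(u)$ in $B_1$ with $\tilde v=\psi$ on $\partial B_1$. Since $0\le\beta_\varepsilon\le 1$ is smooth and $f,g\in L^\infty(B_1)$, both forcings lie in $L^\infty(B_1)$; well-posedness of the Poisson problem is classical, while well-posedness of the inhomogeneous $\infty$-Laplace problem with possibly sign-changing source comes from the theory in \cite{LW}.

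Next, I would check continuity and compactness of $T$. If $(u_k,v_k)\to(u,v)$ in $X$, the Lipschitz character of $\beta_\varepsilon$ gives $\beta_\varepsilon(u_k)\to\beta_\varepsilon(u)$ and $\beta_\varepsilon(v_k)\to\beta_\varepsilon(v)$ uniformly, and stability of viscosity solutions under uniform convergence of the right-hand side -- standard for $\Delta$, a consequence of the continuous dependence results of \cite{LW} for $\Delta_\infty$ -- delivers $T(u_k,v_k)\to T(u,v)$. Compactness reduces to uniform H\"older control on the image: Calder\'on--Zygmund/ABP estimates up to the boundary give a $C^{0,\alpha}$ bound for $\tilde v$, and the H\"older regularity for inhomogeneous $\infty$-harmonic functions from \cite{LW}, combined with a Lipschitz extension of $\varphi$, gives a $C^{0,\alpha}$ bound for $\tilde u$; Arzel\`a--Ascoli then concludes.

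For the a priori bound on $\mathcal{E}$, suppose $(u,v)=\theta T(u,v)$ for some $\theta\in[0,1]$. Since $\Delta_\infty$ is $3$-homogeneous and $\Delta$ is linear, the pair satisfies
$$\Delta_\infty u = \theta^3 f\,\beta_\varepsilon(v), \qquad \Delta v=\theta\, g\,\beta_\varepsilon(u) \quad \text{in } B_1,$$
with boundary data $\theta\varphi$ and $\theta\psi$. As $0\le\beta_\varepsilon\le 1$ and $\theta\in[0,1]$, the sources are bounded uniformly in $\theta$ by $\|f\|_\infty$ and $\|g\|_\infty$. Comparison with explicit radial barriers -- quadratic ones for $\Delta$ and the family $C|x|^{4/3}$ for $\Delta_\infty$, whose $\infty$-Laplacian is a positive constant -- then yields a $\theta$-independent bound $\|u\|_\infty+\|v\|_\infty\le M$ depending only on $\|f\|_\infty$, $\|g\|_\infty$, $\|\varphi\|_\infty$ and $\|\psi\|_\infty$.

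I expect the most delicate step to be the single-valuedness and stability of the $\infty$-Laplace solution operator $v\mapsto\tilde u$ with sign-changing $L^\infty$ source; this is precisely where \cite{LW} is indispensable, and it also supplies the H\"older estimate needed for compactness. Once the four hypotheses of Theorem \ref{Schaefer} are in place, $T$ admits a fixed point, which by construction is a viscosity solution of \eqref{esys}.
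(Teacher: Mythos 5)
Your proposal is the same route as the paper: Schaefer's fixed point theorem applied to the decoupled, $\varepsilon$-penalized system, with continuity and compactness of the solution map obtained from a priori estimates plus Arzel\`a--Ascoli, and the boundedness of $\mathcal{E}$ coming from the $3$-homogeneity of $\Delta_\infty$ and the $1$-homogeneity of $\Delta$. Two of your choices are worth singling out because they actually tighten the argument. First, you take $X=C(\overline{B_1})\times C(\overline{B_1})$; the paper instead declares $X=C^{0,1}(\overline B_1)\times C^{0,1}(\overline B_1)$, but its compactness step only produces a $C^0$-convergent subsequence from a uniform Lipschitz bound (bounded in $C^{0,1}$ does not give precompactness in $C^{0,1}$), so your choice of space is the one that makes the Arzel\`a--Ascoli step literally correct. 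Second, you define $T(u,v)=(\tilde u,\tilde v)$ directly, whereas the paper uses a swapped map $T(\overline u,\overline v)=(v,u)$; your version is cleaner and makes the $\theta^3$ versus $\theta$ scalings on the two equations transparent (the paper's $\theta^2$ for the Laplace equation is a slip, harmless since $\theta\in(0,1]$). You are also right to flag, as the genuinely delicate point, uniqueness and stability of the $\infty$-Laplace solution operator with a possibly sign-changing $L^\infty$ source $f\beta_\varepsilon(\overline v)$: the paper asserts this via Perron and cites \cite{LW} and \cite{SS14}, but for sign-changing data this is the one place where both your argument and the paper's rest on a citation whose scope deserves scrutiny rather than on a self-contained step.
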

	\begin{proof}
		We follow the ideas of \cite[Proposition 2.1]{AT23} (see also \cite{RST17} and \cite{RTU19} for perturbation approach for the infinity Laplacian and Laplacian respectively). Let $\overline{u}$, $\overline{v}\in C^{0,1}(B_1)$, and define $T:C^{0,1}(\overline{B}_1)\times C^{0,1}(\overline{B}_1)\rightarrow
		C^{0,1}(\overline{B}_1)\times C^{0,1}(\overline{B}_1)$ by $T(\overline{u},\overline{v}):= (v,u)$, where $u$, $v$ are solutions of 
		\begin{equation}\label{Perron}
			\left\{
			\begin{array}{cccccc}
				\Delta_\infty u &=& f\,\beta_\varepsilon(\overline{v}) &\text{in}& B_1,\\
				u  &=&\varphi&\text{on} & \partial B_1
			\end{array}
			\right.
		\end{equation}
		and
		\begin{equation}\label{Perron2}
			\left\{
			\begin{array}{cccccc}
				\Delta  v &=& g\,\beta_\varepsilon(\overline{u}) &\text{in}& B_1\\			
				v & =& \psi&\text{on} & \partial B_1.
			\end{array}
			\right.
		\end{equation}	
		respectively (it is clear that such $u$ and $v$ depend on $\varepsilon$, but for the simplicity of notations, we drop the subscript $\varepsilon$ in the proof). Note that $T$ is well defined, as the classical Perron method guarantees the existence and uniqueness of $u$ and $v$. If $T$ has a fixed point, we are done. To apply Schaefer's theorem, we make sure its conditions are satisfied.
  
		\smallskip
  
		\textit{Step 1.} First we check that $T$ is continuous. Indeed, let
		$$
		(\overline{u}_k,\overline{v}_k)\rightarrow (\overline{u},\overline{v})\quad \text{in}\quad C^{0,1}(\overline{B}_1)\times C^{0,1}(\overline{B}_1).
		$$
		We aim to show that
		$$
		T(\overline{u}_k,\overline{v}_k)=T(\overline{u},\overline{v}).
		$$
		By the definition of $T$, we have 	
		$$
		T(\overline{u}_k,\overline{v}_k) = \left(v_k,u_k\right),
		$$
		where $u_k$ and $v_k$ are the \textit{unique} solutions of the corresponding problems \eqref{Perron} and \eqref{Perron2} respectively with $\overline{v}_k$ and $\overline{u}_k$ on the right hand side.
		Global Lipschitz estimates for \eqref{Perron} and \eqref{Perron2} (see \cite[Theorem 1.4]{SS14}) then imply existence of a universal constant $C>0$ such that 
		$$
		\|u_k\|_{C^{0,1}(\overline{B}_1)}\leq C\left(\|f\|_\infty  +\|\varphi\|_\infty\right),
		$$
		and
		$$
		\|v_k\|_{C^{0,1}(\overline{B}_1)}\leq C\left(\|g\|_\infty +\|\psi\|_\infty\right),
		$$
		since $\|\beta_\varepsilon\|_\infty=\|\beta\|_\infty\leq1$. Thus, $(u_k,v_k)$ is uniformly bounded. By the Arzel\'a-Ascoli theorem, up to a subsequence, it converges to some $(\tilde{u},\tilde{v})$. The stability of viscosity solutions under uniform limits then implies, as $k\to\infty$,
		$$
		T(\overline{u}_k,\overline{v}_k) =\left(v_k,u_k\right) \rightarrow (\tilde{v},\tilde{u}) = T(\overline{u},\overline{v}).
		$$

        \smallskip
  
		\textit{Step 2.} We then make sure that $T$ is compact. Indeed, if 
		$$
		(\overline{u}_k,\overline{v}_k)\in C^{0,1}(\overline{B}_1)\times C^{0,1}(\overline{B}_1)
		$$
		is a bounded sequence, then as above, 
		$$
		(v_k,u_k) = T(\overline{u}_k,\overline{v}_k)\in C^{0,1}(\overline{B}_1)\times C^{0,1}(\overline{B}_1)
		$$
		is bounded and, hence, has a convergent subsequence. Thus, $T$ is compact.

        \smallskip
		
		\textit{Step 3.} To use Theorem \ref{Schaefer}, it remains to see that the set of eigenvectors of $T$ is bounded, i.e., the set $\mathcal{E}$  with $X=C^{0,1}(\overline{B}_1)\times C^{0,1}(\overline{B}_1)$ is bounded. Note that $(0,0)\in\mathcal{E}$ if and only if $\theta=0$. Hence, we can assume that $\theta\neq0$. If $(\overline{u},\overline{v})\in \mathcal{E}$, then there exists $\theta\in (0,1]$ such that
		$$
		(\overline{u},\overline{v})=\theta T(\overline{u},\overline{v})=\theta(v,u),
		$$
		i.e., 
		$$
		\overline{u}=\theta v\quad\text{and}\quad \overline{v}=\theta u.
		$$
		Therefore,
		$$
		\left\{
		\begin{array}{ccccc}
			\Delta_\infty\,\overline{v}&=&\theta^3 f\,\beta_\varepsilon(v)&\mbox{in}&B_1, \\
			\overline{v}&=&\theta\varphi&\mbox{on}&\partial{B_1}
		\end{array}
		\right.
		$$
		and
		$$
		\left\{
		\begin{array}{ccccc}
			\Delta\,\overline{u}&=&\theta^2g\,\beta(\overline{u})&\mbox{in}&B_1,\\
			\overline{u}&=&\theta\psi& \mbox{on}&\partial{B_1}. \\
		\end{array}
		\right.
		$$
		Hence, as before,
		$$
		\|\overline{u}\|_{C^{0,1}(\overline{B}_1)}\leq C(\|g\|_\infty+\|\psi\|_\infty)
		$$
		and
		$$
		\|\overline{v}\|_{C^{0,1}(\overline{B}_1)}\leq C(\|f\|_\infty+\|\psi\|_\infty),
		$$
		where $C>0$ is a universal constant. Thus, $\mathcal{E}$ is bounded, and Theorem \ref{Schaefer} guarantees the existence of a fixed point, which completes the proof. 
	\end{proof}
	As a consequence, we obtain the existence of solutions for \eqref{mainsys}.
	\begin{theorem}\label{existthm}
		Assume that \eqref{bounded} holds, then \eqref{mainsys} has a viscosity solution $(u,v)$. Moreover, both coordinates $u$ and $v$ are Lipschitz continuous.
	\end{theorem}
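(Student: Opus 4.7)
The plan is to produce, for each $\varepsilon > 0$, via Lemma \ref{Beta}, a viscosity solution $(u_\varepsilon, v_\varepsilon)$ of the penalized system \eqref{esys} with Lipschitz boundary data compatible with the prescribed obstacles (e.g.\ zero data), and then pass to the limit $\varepsilon \to 0^+$. The penalty $\beta_\varepsilon$ is designed precisely so that, in the limit, each equation is switched on exactly in the positivity set of the \emph{other} unknown.

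The first step is to establish uniform-in-$\varepsilon$ Lipschitz bounds. Since $\beta_\varepsilon \in [0,1]$, the right-hand sides in \eqref{esys} are bounded by $\|f\|_\infty$ and $\|g\|_\infty$ respectively, so the global Lipschitz estimate from \cite[Theorem 1.4]{SS14} for the infinity Laplacian (applied to $u_\varepsilon$) together with standard estimates for the Laplacian (applied to $v_\varepsilon$) yield
$$
\|u_\varepsilon\|_{C^{0,1}(\overline{B}_1)} + \|v_\varepsilon\|_{C^{0,1}(\overline{B}_1)} \leq C,
$$
with $C > 0$ independent of $\varepsilon$. By Arzel\`a-Ascoli, there exists $(u,v) \in C^{0,1}(\overline{B}_1) \times C^{0,1}(\overline{B}_1)$ such that, along a subsequence, $(u_\varepsilon, v_\varepsilon) \to (u,v)$ uniformly on $\overline{B}_1$, and the limit inherits the above bound. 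This settles the Lipschitz regularity claim.

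Next I would verify that $(u,v)$ satisfies \eqref{mainsys} in the viscosity sense. The two differential inequalities $\Delta_\infty u \leq f$ and $\Delta v \leq g$ follow from the stability of viscosity subsolutions under uniform limits, using $f\beta_\varepsilon(v_\varepsilon) \leq f$ and $g\beta_\varepsilon(u_\varepsilon) \leq g$ (valid under the natural nonnegativity convention for obstacle-type sources). For the two equalities, fix any $x_0 \in \{v > 0\}$: by uniform convergence there is a neighborhood $U \ni x_0$ and an $\varepsilon_0 > 0$ such that $v_\varepsilon \geq \varepsilon$ on $U$ for all $\varepsilon < \varepsilon_0$, whence $\beta_\varepsilon(v_\varepsilon) \equiv 1$ on $U$ and $\Delta_\infty u_\varepsilon = f$ in $U$ in the viscosity sense. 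Stability then gives $\Delta_\infty u = f$ throughout the open set $\{v > 0\}$, and the symmetric argument yields $\Delta v = g$ on $\{u > 0\}$.

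The main obstacle, as I see it, is the viscosity-stability step for the \emph{degenerate} operator $\Delta_\infty$, where test functions with vanishing gradient require care and the passage to the limit of the penalized right-hand side $f\beta_\varepsilon(v_\varepsilon)$ must be reconciled with the fact that on the (a priori unknown) set $\{v = 0\}$ the multiplier $\beta_\varepsilon(v_\varepsilon)$ need not converge to a definite value. This is handled, as in \cite[Proposition 2.1]{AT23}, by only extracting the full equation on the open set $\{v > 0\}$ (where convergence of the right-hand side is uniform on compact subsets) and the one-sided bound elsewhere; the continuity of $\Delta_\infty$ in its arguments then makes the standard viscosity stability theorem applicable, closing the argument.
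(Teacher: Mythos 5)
Your proposal follows essentially the same route as the paper: produce $(u_\varepsilon,v_\varepsilon)$ via Lemma \ref{Beta}, observe that the global Lipschitz estimates are uniform in $\varepsilon$, extract a uniformly convergent subsequence by Arzel\`a--Ascoli, and then identify the equations on $\{v>0\}$ and $\{u>0\}$ by noting that once $v_\varepsilon$ (resp.\ $u_\varepsilon$) is bounded away from zero on a neighborhood, $\beta_\varepsilon$ saturates to $1$ and viscosity stability finishes the argument. You are somewhat more explicit than the paper on two points: (i) you actually verify the two global one-sided inequalities $\Delta_\infty u \leq f$, $\Delta v \leq g$, which the paper leaves implicit, correctly observing that the step $f\beta_\varepsilon(v_\varepsilon)\leq f$ requires $f\geq 0$ (the paper only assumes $f,g\in L^\infty$ here, but imposes $f,g\geq c_0>0$ from Section 4 onward, so this is consistent with the paper's setting); and (ii) you flag that $\beta_\varepsilon(v_\varepsilon)$ has no definite limit on $\{v=0\}$ and explain why the argument only extracts the equation on the open positivity sets. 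These are welcome clarifications but do not change the method.
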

	\begin{proof}
		Let $(u_\varepsilon,v_\varepsilon)$ be a viscosity solution of \eqref{esys}, which we know exists thanks to Lemma \ref{Beta}. Moreover, as we observe in the proof of Lemma \ref{Beta}, there exists a universal constant $C>0$, independent of $\varepsilon$, such that $\|u_\varepsilon\|_{C^{0,1}(\overline{B}_1)}<C$ and $\|v_\varepsilon\|_{C^{0,1}(\overline{B}_1)}<C$. By the Arzel\'a-Ascoli theorem, up to a subsequence, 
		$$
		u_\varepsilon\rightarrow u \quad\text{and}\quad v_\varepsilon\rightarrow v
		$$
		uniformly in $B_1$ for some $u$, $v\in C^{0,1}(B_1)$. It remains to check that $(u,v)$ is a solution of \eqref{mainsys}. We need to make sure the equations hold. If $z\in\{v>0\}$, then for $\varepsilon>0$ small enough one has
		$$
		v_\varepsilon(x) > \frac{v(z)}{4} \geq \varepsilon^2 \quad \mbox{for each } \; x \in B_r(z),
		$$ 
		where $r>0$ is small. Therefore, $\Delta_\infty u=f$ in $B_r(z)$. The other equation is checked similarly.
	\end{proof}

 \medskip
 
\section{A built-in obstacle problem}\label{gensection}

We emphasize that problem \eqref{mainsys} contains an obstacle-type problem governed by the infinity Laplacian:
\begin{equation}\label{domani}
\left\{
\begin{array}{rclcc}
\Delta_\infty u & \leq & f & \mbox{in} & B_1\, \\
u & \geq & 0 & \mbox{in} & B_1.
\end{array}
\right.
\end{equation}
Note that \eqref{domani} extends the class of free boundary problems treated in \cite{RTU15}, where it was shown that the solution of
$$
\min\{\Delta_\infty u - f, \, u \,\}=0,
$$
is $C^{1,1/3}$ at the free boundary. However, unlike the equation above, in \eqref{domani}, there is no viscosity sub-solution information in the region $\{u>0\}$. Nevertheless, we derive a growth estimate that paves the way to our main results. Before proceeding, we bring the following result from \cite[Lemma 2.2 (a)]{LW}, which enables equicontinuity property.	
	\begin{lemma}\label{Lipest}
		If $u$ is a viscosity solutions of $\Delta_\infty u\le f$ in $B_1$, where $f \in L^\infty(B_1)$, then it is locally Lipschitz continuous. Moreover, there exists $C>0$, depending only on $\|f\|_{\infty}$ and $n$, such that
		$$
		\sup\limits_{x,y \in B_{1/2}}\frac{|u(x)-u(y)|}{|x-y|}\le C\left(1+\|u\|_\infty\right).
		$$
	\end{lemma}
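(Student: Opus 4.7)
The plan is to employ an Ishii--Lions doubling-of-variables argument tailored to the one-sided viscosity information available here. Note that the hypothesis $\Delta_\infty u\le f$ corresponds, in the paper's convention, to test functions touching $u$ from below yielding $\Delta_\infty\phi(x_0)\le f(x_0)$. This asymmetric condition is precisely what one has access to at one of the two extremizers produced by doubling, and that is what makes the argument viable despite the absence of a matching lower inequality for $\Delta_\infty u$.

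Fix $x_0\in B_{1/2}$ and, for $L\ge 1$ and $\mu>0$ to be specified, introduce on $\overline{B_1}\times\overline{B_1}$
$$
\Phi(x,y)=u(x)-u(y)-L\,\omega(|x-y|)-\mu\bigl(|x-x_0|^2+|y-x_0|^2\bigr),
$$
where $\omega(t)=t-\kappa t^{3/2}$ is a strictly concave modulus on a small interval $[0,t_0]$ with $\kappa>0$ small. The quadratic penalization confines any maximizer of $\Phi$ to $B_{3/4}\times B_{3/4}$ once $L$ is comparable to $\|u\|_\infty$. Arguing by contradiction, suppose $\sup\Phi>0$, realized at $(\hat x,\hat y)$ with $r:=|\hat x-\hat y|>0$. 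Freezing $x=\hat x$ and varying $y$, the smooth function
$$
\phi(y)=u(\hat x)-L\omega(|\hat x-y|)-\mu\bigl(|\hat x-x_0|^2+|y-x_0|^2\bigr)-\Phi(\hat x,\hat y)
$$
satisfies $\phi\le u$ near $\hat y$ with equality at $\hat y$, so the hypothesis delivers $\Delta_\infty\phi(\hat y)\le\|f\|_\infty$. A direct computation gives
$$
\Delta_\infty\phi(\hat y)=-L^3(\omega'(r))^2\omega''(r)+O(\mu L^2)=\tfrac{3\kappa}{4}\,L^3(\omega'(r))^2\,r^{-1/2}+O(\mu L^2),
$$
which blows up as $r\to 0^+$.

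Positivity of the supremum forces $L\,\omega(r)<2\|u\|_\infty$, so $r$ is controllably small in terms of $L$ and $\|u\|_\infty$; combined with the above expansion, choosing $L\sim C(n,\|f\|_\infty)(1+\|u\|_\infty)$ and sending $\mu\to 0^+$ contradicts the viscosity bound $\Delta_\infty\phi(\hat y)\le\|f\|_\infty$. Hence $\sup\Phi\le 0$ for such $L$, which yields $u(x)-u(y)\le L|x-y|$ for $x,y\in B_{1/2}$; swapping $x$ and $y$ delivers the two-sided Lipschitz estimate. The principal technical obstacle is the degeneracy of $\Delta_\infty$ when the gradient of the test function vanishes, which would render the viscosity condition vacuous: this is handled by ensuring $|D\phi(\hat y)|\ge L\omega'(r)/2\gtrsim L>0$ independently of $\mu$, so the viscosity inequality is applied in the non-degenerate regime. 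A secondary bookkeeping task is the careful balance of the parameters $L$, $\mu$, $\kappa$, and $t_0$ so that the leading term in the expansion dominates all error terms and reproduces precisely the claimed dependence $C(1+\|u\|_\infty)$ with $C=C(n,\|f\|_\infty)$.
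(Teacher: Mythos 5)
The paper does not prove Lemma \ref{Lipest} at all; it is quoted verbatim from Lu--Wang \cite[Lemma 2.2 (a)]{LW}, where the argument is a direct comparison with cone-type barriers of the form $a|x-x_0|-b|x-x_0|^{4/3}$. Your Ishii--Lions doubling scheme is a genuinely different route, and the central observation driving it is sound: because $\Delta_\infty u\le f$ is a \emph{supersolution} condition, one tests from below, and freezing $x=\hat x$ in $\Phi$ produces exactly such a test function at $\hat y$; the strictly concave modulus $\omega(t)=t-\kappa t^{3/2}$ then yields $\Delta_\infty\phi(\hat y)\approx -L^3(\omega'(r))^2\omega''(r)=\tfrac{3\kappa}{4}L^3(\omega'(r))^2r^{-1/2}\ge cL^3$, which exceeds $\|f\|_\infty$ once $L\gtrsim \|f\|_\infty^{1/3}$. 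This is the heart of the matter and you have it right.

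There is, however, a genuine gap in the localization. You assert that the soft quadratic penalization $\mu\bigl(|x-x_0|^2+|y-x_0|^2\bigr)$ ``confines any maximizer of $\Phi$ to $B_{3/4}\times B_{3/4}$ once $L$ is comparable to $\|u\|_\infty$,'' and you later send $\mu\to0^+$. Neither a large $L$ nor a small $\mu$ forces the maximizer of $\Phi$ off $\partial B_1\times\overline{B_1}$: positivity of $\Phi(\hat x,\hat y)$ only yields $L\omega(r)<2\|u\|_\infty$, which bounds $r=|\hat x-\hat y|$ but says nothing about the distance of $\hat x,\hat y$ from $\partial B_1$ (e.g.\ $\hat x,\hat y$ could both sit near $\partial B_1$ a distance $r$ apart). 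Since $\mu$ is eventually sent to zero, the penalization cannot compensate either. Without $\hat y\in B_1$ you are not entitled to apply the viscosity inequality, so the contradiction does not yet follow. (A secondary imprecision: the error in $\Delta_\infty\phi(\hat y)$ also contains a term of order $L\mu^2/r$ coming from $-\tfrac{L\omega'(r)}{r}|p_\perp|^2$, not just $O(\mu L^2)$; this is harmless after a compactness argument in $\mu$ that pins $r$ away from $0$, but it should be noted.)

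The fix is cheap and in fact simplifies the proof: drop the doubling and the penalization entirely. Fix $x_*\in B_{1/2}$ and maximize the \emph{single-variable} function $\Psi(y)=u(x_*)-u(y)-L\omega(|x_*-y|)$ over the compact set $\overline{B_{3/4}}$, with $\kappa$ chosen small enough that $\omega$ is increasing and concave on $[0,2]$ with $\omega'\ge 1/2$ there. On $\partial B_{3/4}$ one has $|x_*-y|\ge 1/4$, hence $\Psi\le 2\|u\|_\infty-L\omega(1/4)<0$ once $L>2\|u\|_\infty/\omega(1/4)$; since $\Psi(x_*)=0$, a positive maximum must be attained at an interior point $\hat y\in B_{3/4}\setminus\{x_*\}$, where your computation now gives the clean lower bound $\Delta_\infty\phi(\hat y)\ge cL^3$ with no $\mu$-errors. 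Choosing $L=C(n,\|f\|_\infty)(1+\|u\|_\infty)$ large enough to meet both constraints yields $\Psi\le 0$ on $B_{1/2}$, hence $u(x_*)-u(y)\le L\omega(|x_*-y|)\le L|x_*-y|$, and the two-sided estimate follows by symmetry in $x_*$ and $y$. With this repair, your argument is a correct and rather more elementary alternative to the barrier construction used in the cited source.
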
	
    We use Lemma \ref{Lipest} to derive the following theorem.
	\begin{theorem}\label{growthonu}
		If $u$ is a viscosity solution of \eqref{domani}, and $f\in L^\infty(B_1)$, then there exists $C>0$, depending only on $\|u\|_\infty$, $\|f\|_\infty$ and $n$, such that for each $y\in\partial\{u>0\} \cap B_{1/2}$, one has
		\begin{equation}\label{growthest}
			u(x) \leq C|x-y|^{\frac{4}{3}},
		\end{equation}
		for any $|x-y|\leq 1/4$. Furthermore, $\partial\{u>0\} \subset \{|Du|=0\}$.
	\end{theorem}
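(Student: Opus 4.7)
The plan is to argue by contradiction via a blow-up analysis exploiting the natural $4/3$-scaling of the infinity Laplacian (for which $\Delta_\infty(|x|^{4/3})$ is a positive constant). Suppose \eqref{growthest} fails: for each $k\in\N$ there exist $y_k\in\partial\{u>0\}\cap B_{1/2}$ and $r_k\in(0,1/4]$ with $\sup_{B_{r_k}(y_k)}u>k\,r_k^{4/3}$. I would choose $r_k$ extremally, as the largest such radius; the a priori bound $\|u\|_\infty$ forces $r_k\le 1/8$ once $k$ is large enough, and the extremal choice then yields the complementary one-sided control $\sup_{B_{2r_k}(y_k)}u\le k(2r_k)^{4/3}$ at twice the scale.

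Next, setting $M_k:=\sup_{B_{r_k}(y_k)}u$, I would rescale
\[ v_k(z):=\frac{u(y_k+r_k z)}{M_k},\qquad z\in B_2. \]
By construction $v_k\ge 0$, $v_k(0)=0$ (since $y_k$ lies on the free boundary), $\sup_{B_1}v_k=1$, and $\sup_{B_2}v_k\le 2^{4/3}$. Moreover, the scaling of $\Delta_\infty$ gives
\[ \Delta_\infty v_k(z)=\frac{r_k^4}{M_k^3}\Delta_\infty u(y_k+r_k z)\le\frac{\|f\|_\infty}{k^3}\longrightarrow 0 \]
in the viscosity sense. Lemma \ref{Lipest} applied to the rescaled $v_k$ on $B_2$ then produces uniform Lipschitz estimates on $\overline{B}_1$, so by Arzel\'a-Ascoli a subsequence converges uniformly on $\overline{B}_1$ to some $v_\infty$. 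Standard stability of viscosity inequalities under uniform convergence yields a continuous $v_\infty$ with $v_\infty\ge 0$, $v_\infty(0)=0$, $\sup_{B_1}v_\infty=1$, and $\Delta_\infty v_\infty\le 0$ in $B_1$.

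The hard part will be ruling out such a limit profile: $v_\infty$ is a nonnegative viscosity supersolution of $\Delta_\infty w=0$ attaining its minimum $0$ at the interior point $0$. The strong minimum principle for infinity-superharmonic functions (a consequence of comparison with cones from below; see, e.g., Lindqvist's notes on the infinity Laplacian, or Crandall-Evans-Gariepy) then forces $v_\infty\equiv 0$ in $B_1$, contradicting $\sup_{B_1}v_\infty=1$. This closes the contradiction and establishes \eqref{growthest}.

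The inclusion $\partial\{u>0\}\subset\{|Du|=0\}$ is an immediate corollary: at any free boundary point $y$, the estimate $0\le u(y+h)-u(y)\le C|h|^{4/3}=o(|h|)$ forces $u$ to be differentiable at $y$ with $Du(y)=0$.
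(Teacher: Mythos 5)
Your argument is correct and follows essentially the same blow-up strategy as the paper: rescale at the natural $4/3$-homogeneity of $\Delta_\infty$, invoke Lemma \ref{Lipest} for compactness, and eliminate the limiting profile via the strong minimum principle for infinity-superharmonic functions. Your extremal choice of $r_k$ is a technically cleaner touch, since it yields control at twice the scale and hence uniform convergence on $\overline{B}_1$, so the normalization $\sup_{B_1} v_k = 1$ is transferred directly to the limit; the paper's version, by contrast, normalizes the supremum on $B_1$ but only states uniform convergence on $B_{1/2}$ (a gap that can also be repaired by noting the rescaled functions are actually defined on $B_{1/r_k}\supset B_4$, so Lemma \ref{Lipest} applies on a larger ball).
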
	
	\begin{proof}
		Without loss of generality, we may assume $y=0$. If \eqref{growthest} fails, then for each $k\in\mathbb{N}$, there exist $u_k$, a viscosity solution of \eqref{domani} and $r_k \in (0,1/4]$, such that $u_k(0)=0$ and
		\begin{equation*}\label{coffee}
			\sup_{B_{r_k}}u_k\ge k \,r_k^{\frac{4}{3}}.
		\end{equation*}		
		Set
		$$
		w_k(x):=\frac{u_k(r_k x)}{\displaystyle\sup_{B_{r_k}}u_k} \quad \mbox{in }\; B_1.
		$$
		Note that
		\begin{equation}\label{beergarden}
			\Delta_\infty w_k \leq \left(\frac{r_k^{\frac{4}{3}}}{\displaystyle\sup_{B_{r_k}}u_k}\right)^3\|f\|_\infty \leq \|f\|_\infty \quad \mbox{in }\; B_1.
		\end{equation}
		Additionally, $w_k(0)=0$, $w_k\ge0$ in $B_1$, and
		\begin{equation}\label{sup=1}
			\sup_{B_1}w_k=1.
		\end{equation}
		Applying Lemma \ref{Lipest} for \eqref{beergarden}, we conclude that $\{w_k\}_{k\in\mathbb{N}}$ is equicontinuous. Hence, since $w_k$ is bounded and up to a subsequence, it converges uniformly in $B_{1/2}$ to a function $w_\infty$. Letting $k\to\infty$ in \eqref{beergarden}, we deduce that $w_\infty$ is infinity super-harmonic in $B_{1}$. Therefore, by the strong maximum principle, \cite{BDL99}, it must vanish everywhere, which contradicts \eqref{sup=1}. Thus, \eqref{growthest} holds, which then implies
		$$
		D_i u(0)= \lim\limits_{h \to 0} \frac{u(he_i)-u(y)}{h}=\lim\limits_{h \to 0} \frac{u(he_i)}{h}\leq C\lim\limits_{h \to 0}h^{1/3}=0.
		$$
	\end{proof}

 \medskip
 
	\section{Coupling properties and regularity}\label{s5}
	
    In this section, we prove that there is a strong interplay between coordinate free boundaries $\partial\{u>0\}$, $\partial\{v>0\}$, and the intrinsic free boundary $\partial\{|(u,v)|>0\}$, where $|(u,v)|$ is defined by \eqref{intrinsic}. The idea is that the first coordinate function in the pair of solutions can be looked at as one acting freely, as observed in the previous section. The result is pivotal in establishing optimal interior regularity and growth estimates at the free boundary points.\\
    
    Observe that if in \eqref{mainsys} one has $f<0$ at some point, then $\Delta_\infty u <0$ in a small neighborhood of that point. This forces $u$ to be either strictly positive or identically zero in that neighborhood, and the second equation in \eqref{mainsys} becomes independent. Thus, there is no coupling. Similarly, if $g<0$ at a point, there is no interaction between equations. Therefore, to ensure that we are dealing with coupled equations, hereafter, we assume that
    \begin{equation}\label{nondegRHS}
    \inf_{B_1}\,\min\{f(x),g(x)\} \geq c_0,
    \end{equation} 
    for some $c_0>0$. Assumption \eqref{nondegRHS} is vital for our analysis (see Example \ref{example1}).   

    \smallskip

    Below is the main result of this section.
	\begin{theorem}\label{coupledFBs} 
		Assume $(u,v)\ge0$ is a viscosity solution of \eqref{mainsys} and \eqref{bounded}, \eqref{nondegRHS} hold, then 
		\begin{equation}\label{set2}
			\partial\{u > 0\} \subset \partial\{v > 0\} \subset \overline{\{u>0\}}.
		\end{equation}
		Furthermore,
		\begin{equation}\label{set3}
			\{v > 0\} \subset \{u > 0\} \quad \mbox{and} \quad 
			\partial\{|(u,v)| > 0\}=\partial\{u > 0\}.
		\end{equation}
	\end{theorem}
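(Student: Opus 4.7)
The heart of the statement is the inclusion $\{v>0\}\subset\{u>0\}$ in \eqref{set3}; the two boundary identifications in \eqref{set2} and the equality $\partial\{|(u,v)|>0\}=\partial\{u>0\}$ all follow from it by elementary topology. The plan is to prove this inclusion first, using the viscosity subsolution information carried by the system together with the strict lower bound \eqref{nondegRHS}, and only afterward to derive the boundary statements.

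For the key inclusion I would fix $x_0$ with $v(x_0)>0$; continuity of $v$ yields a ball $B_r(x_0)$ on which $v>0$, so \eqref{mainsys} combined with \eqref{nondegRHS} gives the viscosity subsolution inequality $\Delta_\infty u\ge f\ge c_0>0$ in $B_r(x_0)$. Suppose for contradiction that $u(x_0)=0$; since $u\ge 0$, either (a) $u\equiv 0$ on some ball $B_\rho(x_0)$, or (b) $x_0\in\partial\{u>0\}$. Case (a) falls at once: the test $\phi\equiv 0$ coincides with $u$ on $B_\rho(x_0)$ and therefore touches $u$ from above at $x_0$, so the subsolution inequality forces $0=\Delta_\infty\phi(x_0)\ge c_0>0$, a contradiction. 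Case (b) is the substantive one. Here Theorem \ref{growthonu} supplies both the growth bound $u(x)\le C|x-x_0|^{4/3}$ and the vanishing $|Du(x_0)|=0$, but a smooth $\phi\ge u$ with $\phi(x_0)=0$ cannot exist at such a $4/3$-cusp, so viscosity testing from above is vacuous. I would therefore argue quantitatively: compare $u$ against an explicit radial barrier $w(x)=c|x-x_0|^{4/3}$ with $c^{3}\cdot 64/81=f(x_0)$ (for which $\Delta_\infty w=f(x_0)$ holds classically away from $x_0$), or equivalently perform a $4/3$-homogeneous blow-up $u_r(y):=u(x_0+ry)/r^{4/3}$ yielding, in the limit, a nonnegative entire viscosity solution of $\Delta_\infty u_\infty=f(x_0)>0$ with $u_\infty(0)=0$, to be excluded via a Liouville-type rigidity tied to \eqref{nondegRHS}.

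With $\{v>0\}\subset\{u>0\}$ in hand, the remaining items follow quickly. For $\partial\{u>0\}\subset\partial\{v>0\}$: a point $x_0\in\partial\{u>0\}$ has $u(x_0)=0$, so by the contrapositive of the key inclusion $v(x_0)=0$; to exclude that $x_0$ sits in the interior of $\{v=0\}$, I would assume $v\equiv 0$ on some $B_\rho(x_0)$, pick $y\in B_\rho(x_0)\cap\{u>0\}$ (available because $x_0\in\partial\{u>0\}$), and use the subsolution inequality $\Delta v\ge g\ge c_0>0$ valid in $\{u>0\}$ together with the $\phi\equiv 0$ test to obtain a contradiction in the same way as in case (a) above. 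The inclusion $\partial\{v>0\}\subset\overline{\{u>0\}}$ is immediate, since every boundary point of $\{v>0\}$ is the limit of points in $\{v>0\}\subset\{u>0\}$. Finally the key inclusion forces $\{|(u,v)|>0\}=\{u>0\}\cup\{v>0\}=\{u>0\}$, and passing to topological boundaries gives $\partial\{|(u,v)|>0\}=\partial\{u>0\}$. The main obstacle throughout is case (b) of the key inclusion: at a zero of $u$ lying on its own free boundary inside $\{v>0\}$, the infinity Laplacian degenerates and the smooth-upper-test route is blocked, so the argument must be routed through an explicit barrier or blow-up rather than through direct viscosity testing.
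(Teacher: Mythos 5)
Your decomposition into cases (a) and (b) mirrors the paper's proof closely: the paper's Step 1 ($int\{u=0\}=int\{v=0\}$) is exactly your case (a), and the paper's Step 2 ($\partial\{u>0\}\cap\{v>0\}=\emptyset$, via Theorem~\ref{growthonu}) is your case (b). Your treatment of case (a) is correct and matches the paper's. The issue is entirely in case (b), and there your proposal has a genuine gap: the ``Liouville-type rigidity'' you invoke is false. The function $w(x)=c|x-x_0|^{4/3}$ with $c^3\,64/81=f(x_0)>0$ \emph{is} a nonnegative entire viscosity solution of $\Delta_\infty w = f(x_0)$ vanishing at $x_0$ with $|Dw(x_0)|=0$ (precisely because the $4/3$-cusp makes the subsolution test vacuous there), so no Liouville classification can rule out the blow-up limit you obtain. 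Indeed, the paper's own Example~\ref{sinclusion} and the discussion after Theorem~\ref{coupledFBs} rely on exactly this function being a solution. A barrier comparison with $w$ fares no better, since the blow-up of $u$ at $x_0$ can coincide with $w$, giving no contradiction. So the route you propose for case (b) does not close.

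Your diagnosis, however, is perceptive: you correctly observe that at a genuine $4/3$-cusp no $C^2$ function can touch $u$ from above at $x_0$, so the sub-solution inequality $\Delta_\infty\phi(x_0)\ge f(x_0)$ is vacuous at $x_0$, and direct viscosity testing from above cannot produce a contradiction. The paper's Step~2 is, in fact, exactly that direct testing argument: it asserts that any $C^2$ function $\phi$ touching $u$ from above at $y\in\partial\{u>0\}\cap\{v>0\}$ would satisfy $\Delta_\infty\phi(y)=0<f(y)$, contradicting $\Delta_\infty u=f$ near $y$; the existence of such a $\phi$ is not established. So the concern you raise applies equally to the paper's argument, and neither the paper's proof nor your proposed repair addresses it. If you want to pursue the result, the missing ingredient is some additional input that rules out the blow-up at a coupled free-boundary point being a pure $c|x|^{4/3}$ profile living entirely inside $\{v>0\}$; this seems to require genuinely using the coupling with $v$ (for instance, information coming from the equation $\Delta v=g$ on $\{u>0\}$ together with $(u,v)\ge 0$), not just the scalar equation for $u$.
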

\begin{proof}
		We divide the proof into three steps:

        \smallskip
		
		\textit{Step 1.} First, we observe that
		\begin{equation}\label{set1}
			int\{u=0\} = int\{v=0\}.
		\end{equation}
		In fact, since $f>0$ in $B_1$, if $int\{u=0\} \neq \emptyset$, then for any small ball $B\subset int\{u=0\}$, we have $B\cap\{v>0\}=\emptyset$, since otherwise one would have $\Delta_\infty u=f$ at points where $u\equiv0$, which is not possible, as $f>0$. Hence, $int\{u=0\}\subset int\{v=0\}$. Analogously, as $g>0$ in $B_1$, we obtain $int\{v=0\}\subset int\{v=0\}$. 

        \smallskip
		
		\textit{Step 2.} Next, we show that \eqref{set2} holds. Indeed, thanks to Theorem \ref{growthonu}, $\partial\{u>0\}\subset\{|Du|=0\}$. Therefore, for any $\phi \in C^2$ touching $u$ from above at $y\in \partial\{u>0\}$, we have 
		$$
		D\phi(y)=Du(y)=0.$$ 
		Thus, $\Delta_\infty \phi(y)=0$ and so, 
		$$
		\Delta_\infty u(y)\le0
		$$
		in the viscosity sense. As $f>0$, then
        $$
        \Delta_\infty u(y)<f(y).
        $$
        On the other hand, in \eqref{mainsys} for points in $B_1\cap\{v>0\}$ we have
        $$
        \Delta_\infty u=f.
        $$
        Therefore,         
		$$
		\partial\{u>0\} \cap \{v>0\} = \emptyset.
		$$
		Then from \eqref{set1} we deduce 
		$$
		\partial\{u>0\}\cap int\{v=0\}=\partial\{u>0\}\cap int\{u=0\}=\emptyset,
		$$
		hence, $\partial\{u>0\}\subset\partial\{v>0\}$. Similarly, 
		$$
		\partial\{v>0\} \cap int\{u=0\} = \partial\{v>0\} \cap int\{v=0\} = \emptyset,
		$$
		and so, $\partial\{v>0\} \subset \overline{\{u>0\}}$, and \eqref{set2} is proven.

        \smallskip
		
		\textit{Step 3.} In the final step, we prove \eqref{set3}. Let $x\in\{v>0\}$. From \eqref{set1} we know that $x\in\overline{\{u>0\}}$. The latter with \eqref{set2} implies that $x\in \{u>0\}$. This proves the first part of \eqref{set3}. To check the second part, note that if $x\in\partial\{|(u,v)|>0\}$, then $u(x)=v(x)=0$, which, combined with \eqref{set1} and \eqref{set2} leads to 
		$$
		x\in\partial\{u>0\}\cap\partial\{v>0\}=\partial\{u>0\},
        $$
        i.e., $\partial\{|(u,v)|>0\}\subset\partial\{u>0\}$. On the other hand, if $x \in \partial \{u>0\}$, then recalling once more \eqref{set2}, we have $x\in \partial \{v>0\}$, and so
        $|(u(x),v(x))|=0$. Observe that $x\notin int\{|(u,v)|=0\}$, since otherwise it would imply that $x\in int\{u=0\}$. Thus, $x \in \partial\{|(u,v)|>0\}$, i.e., $\partial\{u>0\}\subset\partial\{|(u,v)|>0\}$.
	\end{proof}
    \begin{remark}\label{strictinclusion}
        In general, the inclusion in \eqref{set2} is strict (see Example \ref{sinclusion} below), i.e., there are points in $\partial\{v>0\}$ that are not in $\partial\{u>0\}$ (see Figure \ref{fig:1} below). Additionally, the equality in \eqref{set3} yields the following decomposition
        $$
        \partial\{v>0\}=\left(\partial\{v>0\}\setminus\partial\{u>0\}\right)\cap\partial\{|(u,v)|>0\}.
        $$
    \end{remark}
    Theorem \ref{coupledFBs} implies optimal growth control for solutions along the coupled free boundary points, as stated in the following result.   
	\begin{theorem}\label{growththm}
		If $(u,v)\ge0$ is a viscosity solution of \eqref{mainsys}, and \eqref{bounded}, \eqref{nondegRHS} hold, then $(u,v)$ is locally of class $C^{0,1}(B_1)\times C^{1,1}(B_1)$. Moreover, there exist positive constants $C$ and $r_0$, depending only on $n$,  $\|f\|_\infty$, $\|g\|_{\infty}$, $\|u\|_\infty$ and $\|v\|_\infty$, such that for $y\in\partial\{|(u,v)|> 0\} \cap B_{1/2}$, there holds    
		\begin{equation}\label{growthest||}
			\sup\limits_{B_r(y)}|(u,v)|\leq Cr^{\frac{2}{3}},
		\end{equation}
		for each $0<r\leq r_0$.
	\end{theorem}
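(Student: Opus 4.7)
The plan is to decouple the argument, handling the two coordinate functions separately, and then to combine them through the definition \eqref{intrinsic} of $|(u,v)|$. Theorem \ref{coupledFBs} is the crucial ingredient, since it identifies the intrinsic free boundary with $\partial\{u>0\}$ and forces inclusion into $\partial\{v>0\}$, allowing the one-sided estimates for $u$ and $v$ to be read off at the same point $y$. For the first coordinate, the equation $\Delta_\infty u \le f$ from \eqref{mainsys} together with $f\in L^\infty(B_1)$ lets us apply Lemma \ref{Lipest} on a dyadic cover of $B_{1/2}$, producing $u\in C^{0,1}_{\loc}(B_1)$ with Lipschitz seminorm controlled by $n$, $\|f\|_\infty$ and $\|u\|_\infty$. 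Moreover, for $y\in\partial\{|(u,v)|>0\}\cap B_{1/2}$, Theorem \ref{coupledFBs} gives $y\in\partial\{u>0\}$, so Theorem \ref{growthonu} supplies constants $C_1,r_0>0$ (depending only on the allowed data) such that
$$
\sup_{B_r(y)} u \le C_1\, r^{4/3}, \qquad 0<r\le r_0.
$$

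For the second coordinate, the key observation is that $v$ itself solves the classical obstacle problem with source $g$. Indeed, $v\ge 0$ and $\Delta v\le g$ in $B_1$ from \eqref{mainsys}, while the inclusion $\{v>0\}\subset\{u>0\}$ established in Theorem \ref{coupledFBs} upgrades the third equation of \eqref{mainsys} to $\Delta v=g$ on the open set $\{v>0\}$. Hence $v$ is a viscosity solution of
$$
\min\{\Delta v - g,\ v\} = 0 \quad \text{in } B_1,
$$
and classical obstacle-problem theory (see, e.g., \cite{C77, CK80, PSU12}) yields $v\in C^{1,1}_{\loc}(B_1)$ with estimates depending only on $n$, $\|g\|_\infty$ and $\|v\|_\infty$. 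Since, again by Theorem \ref{coupledFBs}, any $y\in\partial\{|(u,v)|>0\}$ also lies in $\partial\{v>0\}$, one has $v(y)=0$ and $Dv(y)=0$ (the latter being standard for classical obstacle-problem solutions at free boundary points). A second-order Taylor expansion using the $C^{1,1}$ bound then delivers
$$
\sup_{B_r(y)} v \le C_2\, r^2
$$
for $0<r\le r_0$, with $C_2$ depending only on $n$, $\|g\|_\infty$ and $\|v\|_\infty$.

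Combining the two one-sided controls through \eqref{intrinsic} yields
$$
\sup_{B_r(y)} |(u,v)| \le \bigl(\sup_{B_r(y)} u\bigr)^{1/2} + \bigl(\sup_{B_r(y)} v\bigr)^{1/3} \le \bigl(C_1^{1/2} + C_2^{1/3}\bigr)\, r^{2/3},
$$
which is exactly \eqref{growthest||} with $C:=C_1^{1/2}+C_2^{1/3}$. The only delicate point I anticipate is justifying rigorously that $v$ is a viscosity solution of the classical obstacle problem: the equation $\Delta v=g$ comes from \eqref{mainsys} only on $\{u>0\}$, and one must invoke the coupling inclusion $\{v>0\}\subset\{u>0\}$ from Theorem \ref{coupledFBs} (together with $v\equiv 0$ on $\mathrm{int}\{v=0\}=\mathrm{int}\{u=0\}$, by \eqref{set1}) to transfer this equation to the coincidence-complementary set $\{v>0\}$ without losing information at the common free boundary. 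Once this reduction is in place, the remaining arguments are direct appeals to Lemma \ref{Lipest}, Theorem \ref{growthonu}, and the standard classical obstacle-problem regularity.
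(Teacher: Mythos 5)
Your proposal is correct and follows essentially the same route as the paper's proof: reduce $v$ to a classical obstacle problem via the inclusion $\{v>0\}\subset\{u>0\}$ from Theorem \ref{coupledFBs}, get $v\in C^{1,1}_{\loc}$ (hence quadratic decay at $\partial\{v>0\}$), use Lemma \ref{Lipest} for the Lipschitz bound on $u$ and Theorem \ref{growthonu} for the $r^{4/3}$ decay at $\partial\{u>0\}$, and then combine through the identifications of the free boundaries in Theorem \ref{coupledFBs}. You merely make the last combination step (splitting $|(u,v)|=u^{1/2}+v^{1/3}$ and bounding each term) explicit, which the paper leaves implicit.
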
	
	\begin{proof}
		Unlike similar results in \cite{ALT16, AT23, DT20}, our proof does not employ a flattening argument and instead makes use of Lemma \ref{Lipest} and Theorem \ref{growthonu}.
		Observe that from \eqref{set1}, \eqref{set2} and \eqref{set3}, we derive 
		\begin{equation*}
			\begin{array}{rcl}
				B_1 \cap \{u>0\} & = & B_1 \setminus (\overline{\{u=0\}}) \\[0.2cm]
				& = & B_1 \setminus ({\partial \{u>0\} \cup int\{v=0\}}) \\[0.2cm]
				& \supseteq & B_1 \setminus ({\partial \{v>0\} \cup int\{v=0\}})
				\\[0.2cm]
				& = & B_1 \cap \{v>0\}.
			\end{array}
		\end{equation*}
		Hence, $v$ solves the following classical obstacle problem
		\begin{equation}\label{obstacleproblem}
			\left\{
			\begin{array}{rcll}
				v & \geq & 0 & \mbox{ in } B_1\\
				\Delta v & \leq & g & \mbox{ in } B_1\\
				\Delta v & = & g & \mbox{ in } B_1\cap \{v>0\}. \\
			\end{array}
			\right. 
		\end{equation}
		Therefore (see, for example, \cite{C98}), $v\in C_{\loc}^{1,1}(B_1)$, which combined with Lemma \ref{Lipest}, yields that $(u,v)$ is locally $C^{0,1}\times C^{1,1}$. In particular, for $y\in\partial\{v>0\}\cap B_{1/2}$, one has
		$$
		\sup\limits_{B_r(y)}v\leq Cr^2.
		$$
		The latter, with Theorem \ref{growthonu} and Theorem \ref{coupledFBs}, gives \eqref{growthest||}.
	\end{proof}
 
 \medskip
    
	\section{Non-degeneracy and consequences}\label{s6}
		
	In this section, we prove a comparison principle and derive non-degeneracy, positive density, and porosity results for solutions of \eqref{mainsys}. To proceed, we fix an order when comparing the pairs. Namely, we say $(a,b)<(c,d)$, if $a<c$ and $b<d$. Inequalities $(a,b)>(c,d)$, $(a,b)\leq(c,d)$ and $(a,b)\geq(c,d)$ are understood analogously.
	\begin{lemma}\label{WeakCP}
		Let $(u_i,v_i)$ be a non-negative viscosity solution of
		\begin{equation}\label{1sys} 
			\left\{
			\begin{array}{rcll}
				\Delta_\infty u_i & \leq & f_i & \mbox{ in }\, B_1\\[0.05cm]
				\Delta v_i & \leq & g_i & \mbox{ in }\, B_1\\[0.05cm]
				\Delta_\infty u_i & = & f_i & \mbox{ in }\, B_1 \cap \{v>0\} \\[0.05cm]
				\Delta v_i & = & g_i & \mbox{ in }\, B_1\cap \{u>0\}, \\[0.05cm]
			\end{array}
			\right. 
		\end{equation} 
		with $(f_i,g_i)\in C(B_1) \times C(B_1)$, $i=1,2$ and $g_1<g_2$ in $B_1$. If
		$$
		v_1\geq v_2 \quad \mbox{on } \; \partial B_1,
		\quad
		\mbox{then}  
		\quad 
		v_1\geq v_2\quad \mbox{in }\;B_1.
		$$
	\end{lemma}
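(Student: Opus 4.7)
The plan is a contradiction argument on the open superlevel set $D := \{v_2 > v_1\} \cap B_1$. I aim to show that the difference $w := v_2 - v_1$ is a strict subsolution of the Laplacian on $D$ with $w \leq 0$ on $\partial D$, and then invoke the classical maximum principle to force $D = \emptyset$.

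First, I would reduce each $v_i$-equation to a classical obstacle problem. Applying Theorem \ref{coupledFBs} to the pair $(u_i, v_i)$ yields $\{v_i > 0\} \subset \{u_i > 0\}$, so each $v_i$ satisfies the classical obstacle problem \eqref{obstacleproblem} with forcing $g_i$. The Laplace obstacle theory used in Theorem \ref{growththm} then provides $v_i \in C^{1,1}_{\loc}(B_1)$, with $\Delta v_i = g_i$ classically on $\{v_i > 0\}$ and $\Delta v_i \leq g_i$ pointwise a.e.\ in $B_1$.

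Next, assume $D \neq \emptyset$. Continuity of $v_1, v_2$ makes $D$ open, and on $D$ one has $v_2 > v_1 \geq 0$, so $D \subset \{v_2 > 0\}$ and $\Delta v_2 = g_2$ on $D$. Combined with the a.e.\ inequality for $v_1$, this yields
$$
\Delta w \;=\; \Delta v_2 - \Delta v_1 \;\geq\; g_2 - g_1 \;>\; 0 \qquad \text{a.e.\ in } D.
$$
For the boundary of $D$, continuity forces $w = 0$ on $\partial D \cap B_1$, while the hypothesis gives $w \leq 0$ on $\partial D \cap \partial B_1$; hence $w \leq 0$ on $\partial D$. The classical maximum principle applied to $w \in W^{2,\infty}_{\loc}(D) \cap C(\overline{D})$ now yields $w \leq 0$ on $D$, contradicting $w > 0$ there, so $D = \emptyset$ and $v_1 \geq v_2$ on $B_1$.

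The main obstacle is the interplay between the viscosity formulation and the coupling that collapses the $v$-equation onto a classical obstacle problem: once Theorem \ref{coupledFBs} is invoked to guarantee $\{v_i > 0\} \subset \{u_i > 0\}$, the equality $\Delta v_i = g_i$ becomes available precisely where it is needed, and the remainder is a textbook linear comparison argument. A minor subtlety is that Theorem \ref{coupledFBs} requires \eqref{nondegRHS} for each $(f_i, g_i)$, which should be read as inherited from the ambient hypotheses of Section \ref{s6}.
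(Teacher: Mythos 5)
Your proof is correct and takes essentially the same route as the paper's: both argue by contradiction on the open set $D=\{v_2>v_1\}\cap B_1$, use $D\subset\{v_2>0\}\subset\{u_2>0\}$ (via Theorem \ref{coupledFBs}/\eqref{set3}) to obtain $\Delta v_2=g_2$ on $D$, and then conclude by comparison. The paper's version stays entirely at the viscosity level, comparing the viscosity inequalities $\Delta v_1\leq g_1$ and $\Delta v_2=g_2$ on $D$ directly, whereas you pass through the $C^{1,1}_{\loc}$ regularity and the a.e.\ pointwise formulation before invoking a strong (ABP-type) maximum principle for $W^{2,n}$ subsolutions; your detour is sound but not needed, and it also invokes Theorem \ref{coupledFBs} for both pairs when only $(u_2,v_2)$ requires it. One small point in your favour: you correctly distinguish $w\leq0$ on $\partial D\cap\partial B_1$ from $w=0$ on $\partial D\cap B_1$, whereas the paper writes $v_1=v_2$ on all of $\partial\mathcal{O}$, a harmless imprecision; and your remark that \eqref{nondegRHS} must be read as an implicit hypothesis for the invocation of Theorem \ref{coupledFBs} is accurate and applies to the paper's proof as well.
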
    
	\begin{proof}
		Indeed, if        
		$$
		\mathcal{O}:=\{x\in B_1;\,\,\,v_2(x)>v_1(x)\}\neq\emptyset,
		$$
		then, since $v_1\geq 0$, one has $\mathcal{O}\subset\{v_2>0\}\cap B_1$, which, recalling \eqref{set3} and \eqref{1sys}, leads to
		\begin{equation}\nonumber
			\left\{
			\begin{array}{rcll}
				\Delta v_1 & \leq & g_1 & \mbox{ in } \mathcal{O}\\[0.05cm]
				\Delta v_2 & = & g_2 & \mbox{ in } \mathcal{O} \\[0.05cm]
				v_1 & = & v_2 & \mbox{ on } \partial\mathcal{O}. \\[0.05cm]
			\end{array}
			\right. 
		\end{equation} 
		The comparison principle then gives
		\begin{equation}\nonumber
			v_2\leq v_1 \quad\text{in}\quad \mathcal{O},
		\end{equation}
		which contradicts to the definition of $\mathcal{O}$. Thus, $\mathcal{O}=\emptyset$.
	\end{proof}    
To prove the non-degeneracy of solutions, we assume that
    \begin{equation}\label{continuousRHS}
        f,g\in C(B_1)\cap L^\infty(B_1).
    \end{equation}
	\begin{theorem}\label{nondegthm}
		Assume $(u,v)\ge0$ is a viscosity solution of \eqref{mainsys}, and \eqref{nondegRHS}, \eqref{continuousRHS} hold. There exists a constant $c>0$, depending only on $c_0$, such that for $y\in\overline{\{|(u,v)|>0\}}$, there holds
		\begin{equation*}\label{nondegest}
			\sup_{\overline{B}_r(y)}|(u,v)|\geq c \,r^{\frac{2}{3}},
		\end{equation*}
		for each $r\in\left(0,1/2\right]$.
	\end{theorem}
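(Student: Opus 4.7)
The strategy is to reduce the coupled non-degeneracy to the classical obstacle-problem non-degeneracy for the second coordinate $v$ alone. As observed in the proof of Theorem \ref{growththm}, $v$ by itself solves the classical obstacle problem $v\ge 0$, $\Delta v \le g$ in $B_1$ and $\Delta v = g$ in $\{v>0\}$, with $g\ge c_0>0$ by \eqref{nondegRHS}. Since $|(u,v)|\ge v^{1/3}$ in view of \eqref{intrinsic}, any lower bound of the form $\sup v \gtrsim r^2$ immediately produces $\sup |(u,v)|\gtrsim r^{2/3}$, matching precisely the exponent in the statement. Lemma \ref{WeakCP} will not be needed here; it is geared toward the porosity and positive-density consequences discussed right after.

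The first step is to identify the closures, showing $\overline{\{|(u,v)|>0\}}=\overline{\{v>0\}}$. By \eqref{set3}, $\{v>0\}\subset\{u>0\}$ and $\partial\{|(u,v)|>0\}=\partial\{u>0\}$, while Step~1 of the proof of Theorem \ref{coupledFBs} gives $int\{u=0\}=int\{v=0\}$. Taking complements in $B_1$ yields $\overline{\{u>0\}}=\overline{\{v>0\}}$, hence any $y$ as in the statement lies in $\overline{\{v>0\}}$.

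The second step is the standard non-degeneracy argument for $v$. For $z\in\{v>0\}$ close to $y$ and $r\in(0,1/2]$ with $\overline{B_r(z)}\subset B_1$, I set
\[
w(x) := v(x) - \frac{c_0}{2n}\,|x-z|^2
\]
on the connected component $\Omega_z$ of $B_r(z)\cap\{v>0\}$ containing $z$. Since $\Delta v = g \ge c_0$ in the viscosity sense on $\{v>0\}$, the function $w$ is subharmonic in $\Omega_z$. The boundary $\partial\Omega_z$ splits into $\partial\{v>0\}\cap B_r(z)$, where $w\le 0 < w(z)$, and $\partial B_r(z)\cap\overline{\Omega_z}$; hence the maximum principle forces a point $x_\star$ on the latter piece with
\[
v(x_\star) \ge v(z) + \frac{c_0}{2n}\,r^2.
\]
Sending $z\to y$ and using the continuity of $v$, I then obtain $\sup_{\overline{B_r(y)}} v \ge \frac{c_0}{2n}\,r^2$, whence
\[
\sup_{\overline{B_r(y)}}|(u,v)| \ge \Big(\sup_{\overline{B_r(y)}} v\Big)^{1/3} \ge \Big(\frac{c_0}{2n}\Big)^{1/3}\,r^{2/3},
\]
proving the theorem with $c=(c_0/(2n))^{1/3}$. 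The only delicate point is the potential disconnectedness of $B_r(z)\cap\{v>0\}$, bypassed by restricting the maximum-principle argument to the component $\Omega_z$: the two-piece split of $\partial\Omega_z$ between the free boundary (on which $w\le 0$) and $\partial B_r(z)$ still delivers the required growth.
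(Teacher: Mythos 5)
Your proof is correct, but it takes a genuinely different route from the paper's. The paper builds an explicit barrier pair $(\overline u,\overline v)$ with $\overline u=C_f\tfrac{81}{64}|x|^{4/3}$ and $\overline v=C_g\tfrac12|x|^2$, observes that it solves the coupled system \eqref{mainsys} with strictly smaller sources, and invokes the weak comparison principle of Lemma~\ref{WeakCP} to force a touching point on $\partial B_r$; the coupling structure is used explicitly. You instead isolate the single equation $\Delta v=g\ge c_0$ on $\{v>0\}$ and run the textbook Caffarelli non-degeneracy argument on the subharmonic auxiliary function $w(x)=v(x)-\tfrac{c_0}{2n}|x-z|^2$, applying the maximum principle on the connected component $\Omega_z$ of $B_r(z)\cap\{v>0\}$. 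This bypasses Lemma~\ref{WeakCP} entirely, which you correctly flag, and makes the argument more elementary once the reduction $\overline{\{|(u,v)|>0\}}=\overline{\{v>0\}}$ is in hand; your identification of the closures via \eqref{set1}, \eqref{set3}, and complementation is sound and matches what the paper uses implicitly. A minor technical point you should state is that the passage from $z\in\{v>0\}$ to $y\in\overline{\{v>0\}}$ requires first working with a slightly smaller radius $\rho<r$ so that $\overline{B_\rho(z)}\subset\overline{B_r(y)}$ once $z$ is close enough to $y$, and then letting $\rho\uparrow r$; as written, "sending $z\to y$" alone does not immediately give $x_\star\in\overline{B_r(y)}$. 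Also, your constant $c=(c_0/(2n))^{1/3}$ carries a dimensional factor (the theorem's phrasing "depending only on $c_0$" is harmlessly loose — the paper's own barrier constant should likewise absorb a $1/n$ for $\Delta\overline v<g$ to hold).
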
	
	\begin{proof}
By \eqref{set1}, $\{|(u,v)|>0\} \subset \overline{\{v>0\}}$. Hence, $\overline{\{|(u,v)|>0\}} = \overline{\{v>0\}}$. By continuity, it is enough to show the estimate for $y\in\{v>0\}\cap B_{1/2}$. With no loss of generality, we may assume $y=0$. Set
		$$
		\overline{u}(x):= C_f \,\dfrac{81}{64}|x|^{\frac{4}{3}}  \quad \mbox{and} \quad \overline{v}(x):=C_g\, \frac{1}{2}|x|^2,
		$$  
		for constants 
		$$
		C_f:=\left(\frac{\displaystyle\inf_{B_1} f}{2}\right)^{\frac{1}{3}} \quad \mbox{and} \quad C_g:=\displaystyle\frac{1}{2}\inf_{B_1}g.
		$$		
        Note that $(\overline u, \overline v)$ is a nonnegative viscosity solution of \eqref{mainsys}, with source terms which are strictly less than $(f,g)$. Now if
		\begin{equation}\label{5.3}
			v<\overline{v}\quad \mbox{in } \; \partial{B_r},
		\end{equation} 
		then Lemma \ref{WeakCP} provides 
		$$
		v\le\overline{v} \quad \mbox{in } \; B_r.
		$$
		In particular, $0<v(0)\le\overline{v}(0)$, which contradicts to $\overline{v}(0)=0$. Thus, \eqref{5.3} does not hold, therefore, there exists a point $x\in \partial{B_r}$ such that
		\begin{equation*}\label{nondegproof}
			v(x)\ge\overline{v}(x).
		\end{equation*}
		We then estimate
		\begin{equation}\nonumber
			\sup_{\overline{B}_r}|(u,v)|\geq \sup_{\partial B_r}v^{\frac{1}{3}} \ge\overline{v}^{\frac{1}{3}}(x)=\frac{C_g}{2}\, r^{\frac{2}{3}}. 
		\end{equation}
	\end{proof} 
	As a consequence, we get the following positive density result.
	\begin{corollary}\label{c5.1}
		Assume $(u,v)\ge0$ is a viscosity solution of \eqref{mainsys}, and \eqref{nondegRHS}, \eqref{continuousRHS} hold. There exists $c>0$, such that for $x_0\in \partial\{|(u,v)|>0\}\cap B_\frac{1}{2}$, one has
		$$
		|B_r(x_0)\cap\{|(u,v)|>0\}|\geq cr^n,
		$$
		for each $r\in \left(0,\frac{1}{2}\right)$.
	\end{corollary}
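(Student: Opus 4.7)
The plan is to combine the non-degeneracy estimate of Theorem \ref{nondegthm} with the optimal growth bound of Theorem \ref{growththm} in the classical dichotomy argument used throughout the regularity theory of obstacle problems. First I would apply non-degeneracy at $x_0$ at scale $r/2$: there exists $y \in \overline{B}_{r/2}(x_0)$ with
$$
|(u(y),v(y))| \geq c_1 (r/2)^{2/3} = c_2\, r^{2/3},
$$
and by continuity of $|(u,v)|$ the point $y$ lies in the open positivity set $\{|(u,v)|>0\}$.

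Next, I would estimate from below $d := \dist(y,\partial\{|(u,v)|>0\})$. Let $y^{\ast}\in\partial\{|(u,v)|>0\}$ realize this distance; since $x_0$ itself is a free boundary point, $d \leq |y-x_0| \leq r/2$, so $y^{\ast}\in B_{1/2}$ provided $x_0\in B_{1/4}$ and $r\le 1/4$ (the reduction from $x_0\in B_{1/2}$ to $x_0\in B_{1/4}$ being a standard covering/rescaling). Applying Theorem \ref{growththm} at $y^{\ast}$ yields
$$
c_2\, r^{2/3} \leq |(u(y),v(y))| \leq C_3\, d^{2/3},
$$
and therefore $d \geq \lambda\, r$ for some geometric constant $\lambda \in (0, 1/2)$ depending only on $c_2, C_3$.

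Finally, the ball $B_{\lambda r}(y)$ lies entirely in $\{|(u,v)|>0\}$, since its radius is strictly smaller than $\dist(y,\partial\{|(u,v)|>0\})$, and it lies inside $B_r(x_0)$, because $|y-x_0|\leq r/2$ and $\lambda r\leq r/2$. Consequently,
$$
|B_r(x_0) \cap \{|(u,v)|>0\}| \geq |B_{\lambda r}(y)| = \omega_n \lambda^n r^n =: c\, r^n.
$$
The main technical obstacle is the bookkeeping of the admissible range of $r$: Theorem \ref{growththm} is stated only for scales $r \leq r_0$, so for $r\in[r_0,1/2)$ one concludes via the inclusion $B_r(x_0)\supset B_{r_0/2}(x_0)$ together with the previous case at scale $r_0/2$, at the cost of a harmless multiplicative adjustment of $c$. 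The geometric core of the argument, converting pointwise non-degeneracy into a volume bound through controlled growth at the free boundary, is classical and largely automatic once Theorems \ref{growththm} and \ref{nondegthm} are in hand.
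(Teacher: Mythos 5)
Your proof is correct and follows essentially the same strategy as the paper: use Theorem~\ref{nondegthm} to locate a point $y$ near $x_0$ with $|(u(y),v(y))|\gtrsim r^{2/3}$, then use the growth bound of Theorem~\ref{growththm} at the nearest free boundary point to deduce that a ball $B_{\lambda r}(y)$ of comparable radius lies in $\{|(u,v)|>0\}$, and conclude by a volume estimate. Your version is in fact a bit more careful than the paper's (non-degeneracy at scale $r/2$ so $B_{\lambda r}(y)\subset B_r(x_0)$ entirely, explicit handling of the range $r\ge r_0$, and the reduction to $x_0\in B_{1/4}$), but the underlying argument is identical.
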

	\begin{proof}
		Theorem \ref{nondegthm} guarantees existence of a point $y\in\{ \overline{|(u,v)>0}\}$ such that
		\begin{equation}\label{Density_1}
			|(u(y),v(y)|=\sup_{B_r(y)}|(u,v)|\geq cr^\frac{2}{3}.
		\end{equation}
		The idea now is to make sure that we can choose $0<\tau<1$ such that
		$$
		B_{\tau r}(y)\subset\{|(u,v)|>0\}.
		$$
		Set $d(x):=|x-y|$. Using Theorem \ref{growththm} and \eqref{Density_1}, we obtain
		$$
		cr^\frac{2}{3}\leq|(u(y),v(y))|\leq C(d(x)+|(u(x),v(x))|^\frac{3}{2})^\frac{2}{3},
		$$
		that is,
		$$
		|(u(x),v(x))|\geq \left(\left(\frac{c}{C}\right)^\frac{3}{2}r-d(x)\right)^\frac{2}{3}.
		$$
		Thus, if $\tau<\left(\frac{c}{C}\right)^\frac{3}{2}$, then $|(u(x),v(x))|>0$, i.e., $x\in \{|(u,v)|>0\}$. Therefore,
		$$
		|B_r(x_0)\cap\{|(u,v)|>0\}|\geq |B_r(x_0)\cap B_{\tau r}(y)|\geq cr^n,
		$$
        which is the desired result.
	\end{proof}
	\begin{corollary}
		If $(u,v)\ge0$ is a viscosity solution of \eqref{mainsys}, and \eqref{nondegRHS}, \eqref{continuousRHS} hold, then there exists a universal constant $c>0$ such that 
		$$
		\fint_{B_r(x_0)}|(u,v)|\,dx\geq cr^\frac{2}{3},
		$$
		for all $x_0\in \partial\{|(u,v)|>0\}\cap B_{\frac{1}{2}}$ and $\rho\in \left(0,\frac{1}{2}\right)$.
	\end{corollary}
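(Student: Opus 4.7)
The plan is to upgrade the positive-density statement of Corollary~\ref{c5.1} to an averaged lower bound by integrating the pointwise inequality that is already extracted during its proof.

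First, I apply Theorem~\ref{nondegthm} at $x_0$ to produce $y\in\overline{B_r(x_0)}$ with $|(u(y),v(y))|\ge c_1 r^{2/3}$. Combining this with the growth estimate \eqref{growthest||}, exactly as in the proof of Corollary~\ref{c5.1}, yields
$$|(u(x),v(x))|\ \ge\ \bigl((c_1/C)^{3/2}\,r-|x-y|\bigr)_+^{2/3}$$
on a neighborhood of $y$. Setting $\tau:=\min\bigl\{\tfrac12(c_1/C)^{3/2},\,1\bigr\}$ and restricting to $B_{\tau r}(y)$, this sharpens to
$$|(u(x),v(x))|\ \ge\ (\tau r)^{2/3}\qquad\text{for every }x\in B_{\tau r}(y).$$

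Second, I observe that $B_{\tau r}(y)$ and $B_r(x_0)$ have a comparable-volume intersection. Since $|y-x_0|\le r$ and $\tau\le 1$, the point $y':=y$ (if $|y-x_0|\le\tau r/2$) or $y':=y+\tfrac{\tau r}{2}(x_0-y)/|x_0-y|$ (otherwise) belongs to $B_r(x_0)$ and satisfies $B_{\tau r/2}(y')\subset B_{\tau r}(y)\cap B_r(x_0)$. Combining this with the pointwise bound above,
$$\int_{B_r(x_0)}|(u,v)|\,dx\ \ge\ \bigl|B_{\tau r/2}(y')\bigr|\cdot(\tau r)^{2/3}\ =\ c_n\,\tau^{n+\frac{2}{3}}\,r^{n+\frac{2}{3}},$$
for a dimensional constant $c_n>0$. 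Dividing by $|B_r(x_0)|$ yields the desired estimate with an explicit universal $c$.

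I do not anticipate any genuine obstacle; the only minor subtlety is ensuring that the growth estimate \eqref{growthest||} applies throughout $B_{\tau r}(y)$, i.e.\ that $\tau r\le r_0$, where $r_0$ is the threshold from Theorem~\ref{growththm}. Since $r_0$ is itself a universal constant, the remaining range $r\in[r_0/\tau,\,1/2)$ is absorbed into universal constants by a standard rescaling.
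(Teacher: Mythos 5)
Your proof is correct and follows essentially the same route as the paper: locate the non-degeneracy point $y$ from Theorem~\ref{nondegthm}, use the growth estimate as in Corollary~\ref{c5.1} to get a quantitative lower bound for $|(u,v)|$ on $B_{\tau r}(y)$, and integrate over the comparable-volume intersection with $B_r(x_0)$. In fact your version is slightly more careful than the paper's two-line argument, since you make explicit the pointwise bound $|(u,v)|\ge(\tau r)^{2/3}$ on $B_{\tau r}(y)$ and the containment $B_{\tau r/2}(y')\subset B_{\tau r}(y)\cap B_r(x_0)$, both of which the paper leaves implicit.
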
	
	\begin{proof}
		As in the proof of Corollary \ref{c5.1},
		$$
		B_{\tau r}(y)\subset\{|(u,v)|>0\},
		$$
		and \eqref{Density_1} holds, therefore
		$$
		\fint_{B_r(x_0)}|(u,v)|\,dx\ge
		\fint_{B_r(x_0)\cap B_{\tau r}(y)}|(u,v)|\,dx\ge cr^\frac{2}{3},
		$$		
	\end{proof}
	To state the next consequence, we define porous sets.
	\begin{definition}
		A set $E\subset\R^n$ is called porous with porosity constant $\delta>0$, if there is $\rho>0$ such that for each $x\in E$ and $r\in(0,\rho)$ there is $y\in\R^n$ such that $B_{\delta r}(y)\subset B_r(x)\setminus E$.
	\end{definition}
	The Hausdorff dimension of a porous set does not exceed $n-C\delta^n$, where $C>0$ is a constant depending only on $n$ (see, for example, \cite{MV87}). Hence, the Lebesgue measure of a porous set is zero.
		\begin{corollary}\label{porosity}
		If $(u,v)\ge0$ is a viscosity solution of \eqref{mainsys}, and \eqref{nondegRHS}, \eqref{continuousRHS} hold, then the coupled free boundary is porous, and therefore, has Lebesgue measure zero.
	\end{corollary}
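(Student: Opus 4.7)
The plan is to combine the growth estimate (Theorem \ref{growththm}) with the non-degeneracy estimate (Theorem \ref{nondegthm}) in the classical way that produces porosity: non-degeneracy supplies a point inside every small ball centered at a free boundary point where $|(u,v)|$ is comparable to $r^{2/3}$, while the growth estimate forces such a point to be separated from the free boundary by a distance comparable to $r$. This separation is exactly the porosity hole.

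More precisely, fix $x_0\in\partial\{|(u,v)|>0\}\cap B_{1/2}$ and $r\in(0,r_0/2)$, where $r_0$ is the constant from Theorem \ref{growththm}. Applying Theorem \ref{nondegthm} at $x_0$ with radius $r/2$ yields a point
$$
y\in\overline{B}_{r/2}(x_0) \quad \text{with} \quad |(u(y),v(y))|\ge c\left(\frac{r}{2}\right)^{2/3},
$$
for the universal constant $c>0$ provided by non-degeneracy. Observe that the triangle inequality already gives $B_{r/2}(y)\subset B_r(x_0)$, so any $\delta\le 1/2$ will keep the candidate porosity ball inside $B_r(x_0)$.

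Next I would argue by contradiction that $B_{\delta r}(y)$ contains no free boundary point, for a suitable small $\delta>0$ to be determined. Suppose there existed $z\in B_{\delta r}(y)\cap\partial\{|(u,v)|>0\}$. Then $y\in B_{\delta r}(z)$, and since $\delta r\le r_0$ and $z\in B_{1/2}$ (by shrinking $\rho$ if necessary so that $B_r(x_0)\subset B_{1/2}$), the growth estimate \eqref{growthest||} of Theorem \ref{growththm} applied at $z$ produces
$$
|(u(y),v(y))|\le \sup_{B_{\delta r}(z)}|(u,v)|\le C(\delta r)^{2/3}.
$$
Combining with the non-degeneracy bound gives $c(r/2)^{2/3}\le C(\delta r)^{2/3}$, i.e.\ $\delta\ge \frac{1}{2}(c/C)^{3/2}$. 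Choosing therefore
$$
\delta:=\min\left\{\frac{1}{2},\ \frac{1}{4}\left(\frac{c}{C}\right)^{3/2}\right\}
$$
produces a contradiction, so $B_{\delta r}(y)\subset B_r(x_0)\setminus\partial\{|(u,v)|>0\}$, establishing porosity with constant $\delta$ and radius $\rho:=\min\{r_0/2,\ 1/4\}$.

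The only genuinely delicate point is matching the scale at which the two estimates are valid (they both hold for $y\in B_{1/2}$ and radii up to $r_0$); this is handled by choosing the threshold $\rho$ small enough that $B_r(x_0)\subset B_{1/2}$ and $r\le r_0/2$. Once porosity is established, the conclusion that $\partial\{|(u,v)|>0\}$ has zero Lebesgue measure follows immediately from the cited Hausdorff dimension bound $n-C\delta^n<n$ for porous sets, recalled right before the statement of the corollary.
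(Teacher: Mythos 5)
Your argument is correct and follows the same strategy as the paper's own proof: non-degeneracy (Theorem \ref{nondegthm}) produces a point $y$ at distance $\sim r$ from the free boundary point where $|(u,v)|$ is comparable to $r^{2/3}$, and the growth estimate (Theorem \ref{growththm}) then forces a uniform buffer around $y$ free of $\partial\{|(u,v)|>0\}$. The organizational differences are cosmetic: you apply non-degeneracy at radius $r/2$ so that the candidate ball $B_{\delta r}(y)$ automatically sits inside $B_r(x_0)$, and you argue by contradiction, whereas the paper works at full radius $r$, introduces the distance function $d(y):=\dist(y,\{|(u,v)|=0\})$ to get $d(y)\ge\delta r$ directly, and then recenters the porosity ball at a point $z\in[x,y]$ to fit it inside $B_r(x)$. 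Both routes extract the same inequality $\delta\gtrsim (c/C)^{3/2}$ and yield the same porosity constant up to fixed factors.

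One caveat worth stating precisely: porosity requires a single $\rho>0$ valid for \emph{all} $x_0\in E$, so you cannot simply ``shrink $\rho$ so that $B_r(x_0)\subset B_{1/2}$'' if $x_0$ is allowed to approach $\partial B_{1/2}$. The standard fix, which the paper implicitly adopts by restricting to $\overline{B_r(x_0)}$ with $\overline{B_{2r}(x_0)}\subset B_1$, is to declare $E:=\partial\{|(u,v)|>0\}\cap K$ for a fixed compact $K\subset\subset B_{1/2}$ (e.g.\ $K=\overline{B}_{1/4}$) and take $\rho\le\min\{r_0/2,\dist(K,\partial B_{1/2})\}$; this makes the porosity local, which suffices for the zero-measure conclusion. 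With that adjustment made explicit, your proof is complete.
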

	\begin{proof}
		Let $x\in E:=\partial\{|(u,v)|>0\}\cap\overline{B_r(x_0)}$, for $x_0\in B_1$ such that $\overline{B_{2r}(x_0)}\subset B_1$. For each $\tilde{r}\in(0,r)$, we have $\overline{B_{\tilde{r}}(x)}\subset B_{2r}(x_0)\subset B_1$. From Theorem  \ref{nondegthm}, there exists $y\in\partial B_r(x)$ such that 
		$$
		|(u(y),v(y))|\ge cr^{\frac{2}{3}},
		$$
		for a constant $c>0$. Hence, $y\in B_{2r}(x_0)\cap\{(u,v)>0\}$. Set $d(y):=\dist(y,\overline{B_{2r}(x_0)}\cap\{(u,v)=0\})$, then Theorem \ref{growththm} provides
		$$
		|(u(y),v(y))|\le C\left[d(y)\right]^{\frac{2}{3}},
		$$
		for a constant $C>0$. Therefore, setting 
		$$
		\delta:=\min\left\{\frac{1}{2},\left[cC^{-1}\right]^{\frac{3}{2}}\right\}<1,
		$$ 
		we have
		$$
		d(y)\ge\delta r.
		$$
		Hence, $B_{\delta r}(y)\subset B_{d(y)}(y)\subset\{(u,v)>0\}$. In particular,  
		$$
		B_{\delta r}(y)\cap B_r(x)\subset\{(u,v)>0\}.
		$$
		On the other hand, if $z\in[x,y]$ is such that $|z-y|=\delta r/2$, then
		$$
		B_{(\delta/2)r}(z)\subset B_{\delta r}(y)\cap B_r(x).
		$$		
		Thus,
		$$
		B_{(\delta/2)r}(z)\subset B_{\delta r}(y)\cap B_r(x)\subset B_r(x)\setminus\partial\{u>0\}\subset B_r(x)\setminus E,
		$$
		i.e., $E$ is porous with porosity constant $\delta/2$. 
	\end{proof}

 \smallskip

 \section{Free boundary regularity}\label{FBregularity}

    From the previous section, we already know that the Lebesgue measure of the coupled free boundary $\partial\{|(u,v)|>0\}$ is zero, Corollary \ref{porosity}. In this section, we conclude that its $(n-1)$ dimensional Hausdorff measure is finite, deducing that up to a negligible set of null perimeter, the free boundary is a union of at most countable number of $C^1$ hyper-surfaces. Additionally, we show that Caffarelli's dichotomy holds in the sense that any point in $\partial\{|(u,v)|\geq 0\}$ is either a regular free boundary point for the coordinate function $v$, and around that point $\partial\{v>0\}$ is analytic, or the point is singular, and the set of singular points lies on a $C^1$-manifold. Furthermore, by using blow-up analysis, we conclude that all the points in the uncoupled free boundary $\partial\{v>0\}\setminus\partial\{u>0\}$ are singular.
    \begin{theorem}\label{fbregularity}
        If $(u,v)\ge0$ is a viscosity solution of \eqref{mainsys}, \eqref{nondegRHS} holds and $f\in L^\infty(B_1)$, $g\in C_{\loc}^{0,1}(B_1)\cap L^\infty(B_1)$, then the $(n-1)$-dimensional Hausdorff measure of the free boundary $\partial\{|(u,v)|>0\}$ is locally finite. 
    \end{theorem}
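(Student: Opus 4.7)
The plan is to reduce the Hausdorff measure estimate for the coupled free boundary to the classical Caffarelli-type measure estimate for the scalar obstacle problem solved by the coordinate function $v$. By Theorem \ref{coupledFBs}, we have the chain
$$
\partial\{|(u,v)| > 0\} \;=\; \partial\{u > 0\} \;\subset\; \partial\{v > 0\},
$$
so it suffices to show that $\mathcal{H}^{n-1}(\partial\{v>0\} \cap K) < \infty$ for every compact $K \Subset B_1$.

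Almost all of the required ingredients are already in place. In the proof of Theorem \ref{growththm}, $v$ was shown to be a solution of the classical obstacle problem \eqref{obstacleproblem}, with $v \in C^{1,1}_{\loc}(B_1)$. By assumption, $g \in C^{0,1}_{\loc}(B_1) \cap L^\infty(B_1)$ and $g \geq c_0 > 0$ thanks to \eqref{nondegRHS}; moreover, the comparison argument carried out in the proof of Theorem \ref{nondegthm} delivers quadratic non-degeneracy of the form $\sup_{B_r(y)} v \geq c r^2$ for every $y \in \overline{\{v>0\}} \cap B_{1/2}$. Hence the pair $(v,g)$ meets the standard hypotheses of the classical obstacle problem with Lipschitz source bounded away from zero.

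I would then invoke the classical Caffarelli measure estimate (cf.\ \cite{C98, PSU12}): under these assumptions, $\chi_{\{v>0\}} \in BV_{\loc}(B_1)$, and consequently the reduced boundary $\partial^*\{v>0\}$ has locally finite $\mathcal{H}^{n-1}$-measure. Positive density on both sides of the free boundary---Corollary \ref{c5.1} for $\{v>0\}$, and a direct integration of $\Delta v = g\chi_{\{v>0\}}$ over small balls combined with the $C^{1,1}$ bound for the complement---identifies $\partial^*\{v>0\}$ with $\partial\{v>0\}$ up to $\mathcal{H}^{n-1}$-negligible sets. Transporting the estimate back via the inclusion displayed above closes the argument.

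The main subtlety is merely verifying that the quantitative bounds inherited by $v$ from the coupled system match exactly the form required by the classical scalar estimate; in particular, both the strict positivity of $g$ and its Lipschitz regularity enter the BV bound for $\chi_{\{v>0\}}$ in an essential way, and the examples in Section \ref{sMain} confirm the sharpness of these assumptions. No substantively new obstacle is expected beyond this verification, since Theorem \ref{coupledFBs} has already distilled the a priori coupled free boundary structure into a purely scalar question for $v$.
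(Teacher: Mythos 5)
Your proposal is correct and takes essentially the same route as the paper: reduce via Theorem \ref{coupledFBs} to the inclusion $\partial\{|(u,v)|>0\}\subset\partial\{v>0\}$, observe that $v$ solves the classical obstacle problem \eqref{obstacleproblem} with Lipschitz, non-degenerate source $g$, and invoke the classical Caffarelli-type Hausdorff measure estimate for the scalar problem. The paper cites this last step directly (\cite{C98}, \cite{LS03}) rather than unpacking it through BV and reduced-boundary identification as you do, but the argument is the same.
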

    \begin{proof}
        Indeed, \eqref{obstacleproblem} reveals that $v$ is the solution of the classical obstacle problem, therefore the $(n-1)$-dimensional Hausdorff measure of the free boundary $\partial\{v>0\}$ is locally finite, \cite[Corollary 4]{C98}, \cite[Theorem 3.3]{LS03}. Note now that from Theorem \ref{coupledFBs} we have $\partial\{|(u,v)|>0\}\subset\partial\{v>0\}$. 
    \end{proof}
    \begin{remark}
        As observed in \cite[Theorem 4.2]{CLRT14}, we can replace Lipschitz continuity assumption on $g$ by assuming the following integrability condition
        $$
        \int_{B_r}|\nabla g|\,dx\le C_0r^{n-1},\,\,\,\forall\; r\in(0,3/4),
        $$
        for some universal constant $C_0>0$.
    \end{remark}
    \begin{remark}
    Since $\partial\{|(u,v)|>0\}$ has locally finite $(n-1)$-dimensional Hausdorff measure, the set $\{|(u,v)|>0\}$ has locally finite perimeter in $B_1$, \cite{EG15}.
    \end{remark}
    \begin{remark}
    Up to a negligible set of null perimeter, the free boundary $\partial\{|(u,v)|>0\}$ is a union of, at most, a countable family of $C^1$ hyper-surfaces, \cite{G84}.
    \end{remark}

    Next, we classify free boundary points and deduce free boundary regularity for one of the coordinate functions. For simplicity of the argument, we will assume 
    \begin{equation*}\label{simplicity}
        f\equiv g\equiv1,
    \end{equation*}
    i.e., $(u,v)$ is a viscosity solution of
    \begin{equation}\label{mainsys2} 
\left\{
\begin{array}{rcll} 
\Delta_\infty u & \leq & 1 & \mbox{ in }\,\, B_1\\
\Delta v & \leq & 1 & \mbox{ in }\,\, B_1\\
\Delta_\infty u & = & 1 & \mbox{ in }\,\, B_1 \cap \{v>0\} \\
\Delta v & = & 1 & \mbox{ in }\,\, B_1\cap \{u>0\}.
\end{array}
\right. 
	\end{equation}
    To classify free boundary points, we recall the following definition.
    \begin{definition}\label{regularpoint}
        A free boundary point $x_0\in\partial\{v>0\}$ is called 
        \begin{itemize}
            \item regular, if up to a sub-sequence, as $r\to0^+$, one has
        $$
        \frac{v(x_0+rx)}{r^2}\to\frac{1}{2}[(e\cdot x)_+]^2,
        $$
        for some unit vector $e\in\mathbb{S}^{n-1}$;
            \item  singular, if up to a sub-sequence, as $r\to0^+$, one has
        $$
        \frac{v(x_0+rx)}{r^2}\to\frac{1}{2}\langle Ax,x\rangle,
        $$
        for some non-negative definite matrix $A\in\R^{n\times n}$ with $tr(A)=1$.
        \end{itemize}
    \end{definition}     
    Since $v\ge0$ solves the classical obstacle problem \eqref{obstacleproblem}, the result below follows as a direct consequence of the celebrated Caffarelli's dichotomy, see \cite{C77}, \cite[Theorem 8]{C98}, \cite[Theorem 7.3]{F18}.
    \begin{theorem}\label{dichotomy}
        If $(u,v)\ge0$ is a viscosity solution of \eqref{mainsys2}, and $x_0\in\partial\{v>0\}$, then either $x_0$ is a regular point, and in its small neighborhood the free boundary $\partial\{v>0\}$ is an analytic hyper-surface consisting only of regular points, or $x_0$ is a singular point, and all singular points lie in a $k$-dimensional $C^1$ manifold, where $k$ is the dimension of the kernel of the matrix $A$.
    \end{theorem}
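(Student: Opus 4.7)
The plan is to reduce the dichotomy to Caffarelli's classical theorem for the scalar obstacle problem applied to the coordinate function $v$. The crucial input is the inclusion $\{v>0\}\subset\{u>0\}$ from \eqref{set3} of Theorem \ref{coupledFBs}: combined with $\Delta v\le 1$ in $B_1$ and $\Delta v=1$ in $B_1\cap\{u>0\}$ from \eqref{mainsys2}, it forces $\Delta v = 1$ throughout $\{v>0\}$. Thus
\begin{equation*}
v\ge 0,\qquad \Delta v\le 1 \text{ in } B_1,\qquad \Delta v = 1 \text{ in } B_1\cap\{v>0\},
\end{equation*}
which is precisely the classical obstacle problem \eqref{obstacleproblem} with $g\equiv 1$, now decoupled from $u$. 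This step is the entire content carried by the coupling analysis of Section \ref{s5}.

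From this point on the statement becomes purely scalar. By Caffarelli's $C^{1,1}$ estimate, the rescalings $v_r(x):=r^{-2}v(x_0+rx)$ are precompact in $C^{1,\alpha}_{\loc}$, and any subsequential limit $v_0$ is a $2$-homogeneous, convex, global solution of the obstacle problem in $\R^n$. The classical classification at a free boundary point (see \cite[Theorem 8]{C98}, \cite[Theorem 7.3]{F18}) forces $v_0$ to be either a half-space profile $\tfrac12[(e\cdot x)_+]^2$ or a quadratic polynomial $\tfrac12\langle Ax,x\rangle$ with $A\ge 0$ and $\T(A)=1$, which matches exactly the two cases of Definition \ref{regularpoint}.

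At a regular point $x_0$, I would invoke Caffarelli's theorem on regular free boundary points (\cite[Theorem 8]{C98}) to obtain $C^1$ regularity of $\partial\{v>0\}$ in a whole neighborhood of $x_0$, upgraded to analytic by the standard bootstrap since the right-hand side of $\Delta v = 1$ on $\{v>0\}$ is constant; the openness of the set of regular blow-ups ensures that this neighborhood consists exclusively of regular points. At a singular $x_0$, the Caffarelli--Monneau stratification (\cite[Theorem 7.3]{F18}) confines the singular stratum with $\dim\ker A = k$ to a $C^1$ manifold of dimension $k$. I do not anticipate any genuine obstacle: the only nontrivial ingredient is the reduction to a scalar obstacle problem, which has already been packaged into Theorem \ref{coupledFBs}, so the proof amounts to a direct citation of Caffarelli's dichotomy made legitimate by the coupling property.
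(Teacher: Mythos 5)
Your proposal is correct and follows exactly the paper's route: use the inclusion $\{v>0\}\subset\{u>0\}$ from Theorem~\ref{coupledFBs} to observe that $v$ solves the decoupled scalar obstacle problem \eqref{obstacleproblem}, then cite Caffarelli's dichotomy \cite[Theorem 8]{C98}, \cite[Theorem 7.3]{F18}. The paper treats this as an immediate consequence and gives no further details, so your additional exposition of the blow-up classification is consistent with, but more explicit than, what the authors wrote.
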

    Furthermore, we show that all points on the uncoupled free boundary are singular (recall Remark \ref{strictinclusion}).     
    \begin{theorem}\label{singularset}
     If $(u,v)\ge0$ is a viscosity solution of \eqref{mainsys2}, then singular points exhaust the set $\partial\{v>0\}\setminus\partial\{u>0\}$.
    \end{theorem}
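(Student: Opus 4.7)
The plan is to show that any point in $\partial\{v>0\}\setminus\partial\{u>0\}$ is automatically an interior point of $\{u>0\}$, and then leverage the smoothness of $v$ that this entails.

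Let $x_0 \in \partial\{v>0\}\setminus\partial\{u>0\}$. First, I invoke Theorem \ref{coupledFBs}, which gives $\partial\{v>0\}\subset\overline{\{u>0\}}$. Since $\{u>0\}$ is open (as $u$ is continuous) and $x_0\notin\partial\{u>0\}$, it follows that $x_0\in\{u>0\}$, so $u(x_0)>0$. By continuity, $u>0$ in some ball $B_\rho(x_0)$, and therefore the fourth line of \eqref{mainsys2} yields
\[
\Delta v = 1 \quad \mbox{in } B_\rho(x_0).
\]
Since $v$ is continuous (by Theorem \ref{growththm}) and solves the Poisson equation with constant right-hand side, classical elliptic regularity upgrades $v$ to $C^\infty(B_\rho(x_0))$ (in fact analytic).

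Second, I exploit the fact that $v\geq 0$ with $v(x_0)=0$, so $x_0$ is a local minimum. Hence $Dv(x_0)=0$ and the Hessian $A:=D^2 v(x_0)$ is symmetric and positive semi-definite. Moreover, $\mathrm{tr}(A)=\Delta v(x_0)=1$. Taylor's theorem then gives
\[
\frac{v(x_0+ry)}{r^2} \;\longrightarrow\; \frac{1}{2}\langle A y,y\rangle \quad \mbox{as } r\to 0^+,
\]
uniformly on compact subsets of $\R^n$.

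Finally, Theorem \ref{dichotomy} says every $x_0\in\partial\{v>0\}$ is either regular or singular. If $x_0$ were regular, then along some sub-sequence the blow-up would equal $\frac{1}{2}[(e\cdot y)_+]^2$ for a unit vector $e$; but this half-space profile vanishes identically on $\{e\cdot y<0\}$ without being identically zero, hence it is not a quadratic polynomial. This is incompatible with the blow-up being $\frac{1}{2}\langle Ay,y\rangle$, forced by the smoothness of $v$ at $x_0$. Therefore $x_0$ must be singular, with the associated matrix precisely $A=D^2v(x_0)$.

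There is no genuine obstacle here once one recognises the coupling afforded by Theorem \ref{coupledFBs}; the only subtle point is the step that identifies $x_0$ as an interior point of $\{u>0\}$, which hinges on the strict containment and the openness of $\{u>0\}$. Everything else follows from standard interior regularity for Poisson's equation and the Caffarelli dichotomy.
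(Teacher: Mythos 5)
Your proof is correct and rests on the same central insight as the paper: since $x_0\in\partial\{v>0\}\setminus\partial\{u>0\}\subset\overline{\{u>0\}}\setminus\partial\{u>0\}=\{u>0\}$ (using Theorem \ref{coupledFBs} and openness of $\{u>0\}$), the system forces $\Delta v=1$ in a full ball $B_\rho(x_0)$, and that is incompatible with $x_0$ being a regular free boundary point. The execution, however, differs slightly. The paper argues by contradiction: it rescales $v_r(x)=v(rx)/r^2$, passes to a subsequential limit $v_\infty=\frac{1}{2}[(e\cdot x)_+]^2$ using the dichotomy, and then uses viscosity stability of the equation $\Delta v_r=1$ in $B_{\tau/r}$ to obtain $\Delta v_\infty=1$ in all of $\mathbb{R}^n$, contradicting the half-space profile. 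You instead observe that $\Delta v=1$ in $B_\rho(x_0)$ makes $v$ smooth (indeed analytic) there, so at the local minimum $x_0$ one has $Dv(x_0)=0$, $A:=D^2 v(x_0)\geq 0$, $\mathrm{tr}(A)=1$, and Taylor expansion identifies the blow-up as the quadratic form $\frac{1}{2}\langle Ay,y\rangle$ directly. This route is more elementary (no compactness or stability argument, no contradiction), and it yields slightly more: the blow-up limit exists as a full limit rather than along a subsequence, and the singular matrix is explicitly $A=D^2 v(x_0)$. Both arguments are sound; yours avoids the blow-up machinery by exploiting the interior regularity that the coupling property already delivers.
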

    \begin{proof}
    We argue by contradiction and assume that there is a regular point $x_0\in\partial\{v>0\}$ that is in $\partial\{v>0\}\setminus \partial\{u>0\}$. Without loss of generality, we may assume that $x_0=0$. Theorem \ref{coupledFBs} then provides
    \begin{equation}\label{contradictoryassumption}
        0\in \partial\{v>0\} \cap \{u>0\}.
    \end{equation}
    For each $r>0$, we define
    \begin{equation*}
    v_r(x):=\frac{v(rx)-v(0)}{r^2}    
    \end{equation*}
    and observe that
    $$
    v_r(0)=0 \quad \mbox{and} \quad  \sup_{B_1}v_r\le C
    $$
    where the constant $C>0$ depends only on $\|u\|_\infty$ and $\|v\|_\infty$.
    Ascoli-Arzel\`a theorem then implies that the family $\{v_r\}_r$ is compact in the $C^{0,1} \times C^{1,1}$ topology. Therefore, applying Theorem \ref{dichotomy}, we arrive at
    $$
    v_r\to v_\infty,\,\mbox{ as }\; r\to 0^+,
    $$
    where     
    \begin{equation}\label{blow}
    v_\infty(x)=\frac{1}{2}[(e\cdot x)_+]^2,
    \end{equation}
    for some $e\in\mathbb{S}^{n-1}$. From \eqref{contradictoryassumption}, we have $u>0$ in a small ball $B_\tau$ centered at the origin. Hence, $\Delta v=1$ in $B_\tau$ and so 
    $$
    \Delta v_r = 1\,\,\mbox{ in }\,\, B_{\tau/r}.
    $$
    The latter implies
    $$
    \Delta v_\infty = 1\,\,\mbox{ in }\,\,\mathbb{R}^n,
    $$
    which contradicts \eqref{blow}.
    \end{proof}

\begin{figure}[h!]
\begin{tikzpicture}[scale=0.55,cap=round]
 \tikzset{axes/.style={}}
\begin{scope}[style=axes]
\draw[thick, gray!80, cm={cos(180) ,-sin(180) ,sin(180) ,cos(180) ,(3.0,5.0)}] 
(0.25,0.4) to [out=10,in=-90]  (1.5,2.0)
 to [out=-90, in=175] (2.75,0.4);
\draw[thick, gray!80] (0.25,0.4) to [out=10,in=-90]  (1.5,2.0) to [out=-90, in=175] (2.75,0.4);
\node at (1.5,2.45) {$u>0$};
\draw[thin, black!80, <-] (1.7,3.5) -- ++ (2.3,0.8) node[above right] at (4.0,3.6) {$\partial\{u>0\}$};
\draw[blue!80] (1.5,2.5) circle (2.45cm);
\end{scope}
\end{tikzpicture}
\begin{tikzpicture}[scale=0.55,cap=round]
 \tikzset{axes/.style={}}
\draw[thick, black] (1.5,2.0) -- (1.5,3.1);
\node at (0.3,2.45) {$v>0$};
\node at (2.8,2.45) {$v>0$};
\begin{scope}[style=axes]
\draw[thick, gray!80, cm={cos(180) ,-sin(180) ,sin(180) ,cos(180) ,(3.0,5.0)}] (0.25,0.4) to [out=10,in=-90]  (1.5,2.0)
 to [out=-90, in=175] (2.75,0.4);
\draw[thick, gray!80] (0.25,0.4) to [out=10,in=-90]  (1.5,2.0)
 to [out=-90, in=175] (2.75,0.4);
\draw[thin, black!80, <-] (1.6,2.55) -- ++ (2.3,1.5) node[above right] at (4.0,3.6) {$\partial\{v>0\}\setminus \partial\{u>0\}$};
\draw[blue!80] (1.5,2.5) circle (2.45cm);
\end{scope}
\end{tikzpicture}
\caption{An illustration of uncoupled free boundary points shaped in a singular fashion. All regular free boundary points are in the coupled free boundary.} 
\label{fig:1}
\end{figure}
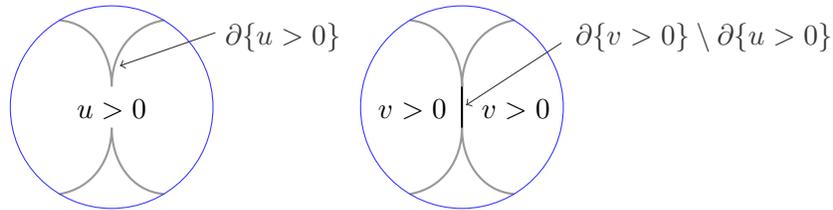
    
\medskip
    
 \section{Examples and beyond}\label{sMain}

In the final section, we bring explicit examples emphasizing the sharpness of assumptions in our main results. The first example shows that assumption \eqref{nondegRHS} is vital for our analysis.
\begin{example}\label{example1}
For a given $\alpha>0$ and $\epsilon>0$ (small), take a constant $C_\alpha>0$, such that the pair $(u,v_\epsilon)$, for  
	\begin{equation*}\label{ex1}
		\begin{array}{cc}
			u(x):=\dfrac{81}{64}(x_1)_+^{4/3} \quad \mbox{and} \quad v_\epsilon(x):=\left(x_1-\epsilon\right)_+^{2+\alpha},
		\end{array}
	\end{equation*}
	solves \eqref{mainsys} with $f=1$ and $g_\epsilon=C_\alpha\left(x_1-\epsilon\right)_+^{\alpha}$. Observe that
	$$
	\displaystyle\inf_{B_1} g_\epsilon =0,
	$$
	i.e. \eqref{nondegRHS} fails, and also	
	$$
	\partial\{u>0\}= \{x_1=0\} \quad \mbox{and} \quad \partial\{v_\epsilon>0\}= \{x_1=\epsilon\},
	$$
    therefore, $\partial\{|(u,v_\epsilon)|>0\}=\emptyset$. Hence, the lack of condition \eqref{nondegRHS} leads to the failure of \eqref{set2} and \eqref{set3}.
    \end{example}	
    The next example highlights the importance of assumption $(u,v)\geq0$.
    \begin{example}
	Unlike classical obstacle problems (see, for example, \cite{C98, CLRT14, RT11}), solutions of \eqref{mainsys} may fail to be nonnegative even when the boundary data is nonnegative, as shows the following example. Indeed, the pair of functions
	\begin{equation*}\label{ex2}
		u(x):=\dfrac{81}{64}|x|^{4/3} \quad \mbox{and} \quad  v_\epsilon(x):=\frac{1}{2}|x|^2-\epsilon^2, \quad x \in B_1,
	\end{equation*}
	solves (for $\epsilon>0$ small) \eqref{mainsys} with $f\equiv g\equiv1$. Although both ${u}$ and ${v}$ are positive on $\partial B_1$, the function ${v}$ is strictly negative in $B_{\epsilon\sqrt{2}}$. Observe that 
	$$
	\partial\{{u}>0\}=\{0\}, \quad \partial\{{v}> 0\}=\partial B_{\epsilon\sqrt{2}} \quad \mbox{and} \quad  \partial\{|({u},{v})|>0\}=\emptyset,
	$$
	and once again \eqref{set2} and \eqref{set3} fail.
    \end{example}	
    Our third example points out that estimate \eqref{growthest||} is optimal.
    \begin{example} In the previous example, by moving the paraboloid $v_\epsilon$ up and passing to the limit, as $\epsilon\to0$, we obtain 
    \begin{equation*}\label{ex3}
    u(x):=\dfrac{81}{64}|x|^{4/3} \quad \mbox{and} \quad v(x):=\frac{1}{2}|x|^2, \quad x \in B_1, 
    \end{equation*}
    which is a non-negative solution of \eqref{mainsys} with $f\equiv g\equiv1$, hence Theorem \ref{coupledFBs} holds for $( u,  v)$. In fact,
    $$
    \partial\{|( u, v)| > 0\}= \partial\{ u > 0\}= \partial\{ v > 0\}= \{0\}.  
    $$
    \end{example}
\begin{figure}[h!]
\centering
\begin{tikzpicture}
\begin{axis}[title=, hide axis, colormap/cool]
\addplot3[surf, domain=-3:3, samples=50]{y^2};
\addplot3[surf, domain=-3:3, samples=50]{(4)*((x^2+y^2)^(2/3))};
\addplot[magenta,mark=none,domain=0.1:3,line legend] {-0.1};
\addplot[magenta,mark=none,domain=-3:-0.05,line legend] {-0.1};
\end{axis}
\path (3.4,1.47) node[circle, fill, blue!40,inner sep=1]{};
\end{tikzpicture}
\caption{An illustration of a non-empty uncoupled free boundary $\partial\{v>0\}\setminus \partial\{u>0\}$ (see Example \ref{sinclusion}). The red line above is $\partial\{v>0\}$ and the blue point is $\partial\{u>0\}$. By Theorem \ref{singularset}, all points on $\partial\{v>0\}\setminus \partial\{u>0\}$ are singular. In this example, there are no regular points.}
\label{fig:2}
\end{figure}
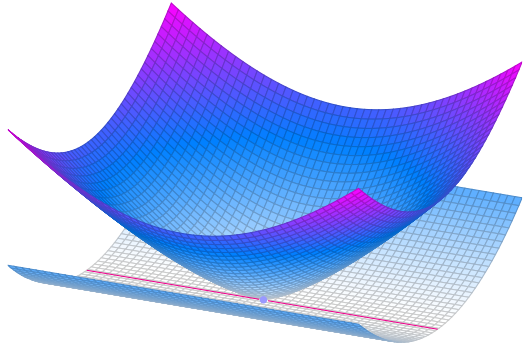

    The next example reveals that inclusion \eqref{set2} is strict, Remark \ref{strictinclusion}.
    
    \begin{example}\label{sinclusion}
      The pair of functions $(u,v)$, where
    \begin{equation*}\label{counter}
		u(x,y):=\frac{4}{3^\frac{4}{3}}(x^2+y^2)^{2/3} \quad \mbox{and} \quad  v(x,y):=\frac{1}{2}y^2,\quad x,y\in\R,
	\end{equation*}
     solves
    \begin{equation*}
    \begin{array}{rcl}
        \Delta_\infty u  = 1 & \mbox{in}  &  \mathbb{R}^2\setminus \{0\} \supseteq\left( \mathbb{R}^2 \setminus \{y=0\}\right) = \{v>0\},\\[0.2cm]
        \Delta v = 1 & \mbox{in}  &  \mathbb{R}^2 \supset \mathbb{R}^2 \setminus \{0\} = \{u>0\},
    \end{array}
    \end{equation*}
    while $\partial\{v>0\} \setminus \partial\{u>0\}=\{y=0\} \setminus \{0\}\neq \emptyset$. 
    \end{example}

\bigskip
	
\noindent{\bf Acknowledgments.} DJA thanks the Abdus Salam International Centre for Theoretical Physics (ICTP) for great hospitality during his research visits. DJA is partially supported by CNPq grants 310020/2022-0 and 420014/2023-3 and by grant 2019/0014 Paraiba State Research Foundation (FAPESQ). RT was partially supported by the King Abdullah University of Science and Technology (KAUST), by the Centre for Mathematics of the University of Coimbra (funded by the Portuguese Government through FCT/MCTES, DOI 10.54499/UIDB/00324/2020), and by FCT, DOI 10.54499/2022.02357.CEECIND/CP1714/CT0001.

\bigskip

	\end{document}